\newcommand{\C} {\mathbb{C}}
\newcommand{\Q} {\mathbb{Q}}
\newcommand{\N}  {\mathbb{N}}
\newcommand{\F}{\mathbb{F}}
\newcommand{\Z}{\mathbb{Z}}
\newcommand{\OO}{\mathcal{O}}
\newcommand{\PP}{\mathbb{P}}
\newcommand{\NS}{\mathop{\rm NS}\nolimits}
\newcommand{\NeS}{N\'eron-Severi}
\newcommand{\MW}{\mbox{MW}}
\newcommand\MoW{Mordell-\kern-.2exWeil}
\newcommand{\A}{\mathbb{A}}
\newcommand{\ta}{\tilde A}
\newcommand{\td}{\tilde D}
\newcommand{\te}{\tilde E}
\newcommand{\0}{^{\phantom0}}
\newtheorem{Theorem}{Theorem}
\newtheorem{Proposition}[Theorem]{Proposition}
\theoremstyle{remark}
\newtheorem{Remark}[Theorem]{Remark}
\newtheorem{Example}[Theorem]{Example}
\theoremstyle{definition}
\begin{document}

\title[Genus 1 fibrations]{Genus 1 fibrations
on the supersingular K3 surface in characteristic 2 with Artin invariant 1}

\author{Noam D.~Elkies}
\address{
Mathematics Department,
Harvard University,
1 Oxford Street,
Cambridge, MA 02138,
USA}
\email{elkies@math.harvard.edu}
\urladdr{http://www.math.harvard.edu/$\sim$elkies}

\author{Matthias Sch\"utt}
\address{Institut f\"{u}r Algebraische Geometrie, Leibniz Universit\"{a}t
  Hannover, Welfengarten 1, 30167 Hannover, Germany}
\email{schuett@math.uni-hannover.de}
\urladdr{http://www.iag.uni-hannover.de/$\sim$schuett/}

\subjclass[2010]{14J27, 14J28; 06B05, 11G25, 51A20, 14N20}

\keywords{K3 surface, supersingular, elliptic fibration, quasi-elliptic}

\thanks{Partial support from NSF under grant DMS-0501029
and ERC under StG~279723 (SURFARI) gratefully acknowledged.
}

\date{March 20, 2014}

 \begin{abstract}
The supersingular K3 surface~$X$\/ in characteristic~$2$
with Artin invariant~$1$ admits several genus~$1$ fibrations
(elliptic and quasi-elliptic).  We use a bijection between fibrations and
definite even lattices of rank~$20$ and discriminant~$4$
to classify the fibrations, and we exhibit isomorphisms between the
resulting models of~$X$.
We also study a configuration of $(-2)$-curves on~$X$\/ related 
to the incidence graph of
points and lines of $\PP^2(\F_4)$.

 \end{abstract}

\maketitle

\section{Introduction}

Elliptic fibrations are a versatile tool for studying algebraic surfaces.
One of their key advantages is that one can often compute the \NeS\ lattice,
and in particular the Picard number, in a systematic way.
This has been carried out with great success in the study of K3 surfaces.
There is one feature that singles out K3 surfaces among all algebraic
surfaces admitting elliptic fibrations:
a single K3 surface may admit several distinct elliptic fibrations.


Several previous papers classify all jacobian elliptic fibrations 
on a given class of K3 surfaces (i.e.~elliptic fibrations with section).
Oguiso determined all jacobian elliptic fibrations of a Kummer
surface of two non-isogenous elliptic curves~\cite{Oguiso}.
This classification was achieved by geometric means.
Subsequently Nishiyama proved Oguiso's result again
by a lattice theoretic technique~\cite{Nishi}.
Equations and elliptic parameters were derived by Kuwata and Shioda \cite{KS}.
Nishiyama also considered other Kummer surfaces of product type and certain
singular K3 surfaces.  Kumar recently determined all elliptic fibrations on
the Kummer surface of the Jacobian of a generic curve of genus~$2$~\cite{Kumar}.

All these classifications are \emph{a priori} only valid in characteristic zero.
In this paper we present a classification that is specific
to positive characteristic and does not miss any non-jacobian fibrations.
Namely we consider the supersingular K3 surface $X$ in characteristic
2 with Artin invariant 1. 
In this setting we must deal with quasi-elliptic fibrations
whose general fiber is a cuspidal rational curve.
As a uniform notation, we shall refer to either an elliptic or a
quasi-elliptic fibration as a \emph{genus~1 fibration}.

\begin{Theorem}
\label{thm}
Let $X$ denote the supersingular K3 surface $X$ with Artin invariant~1
over an algebraically closed field of characteristic~2.
Then $X$ admits exactly 18  genus~1 fibrations.
\end{Theorem}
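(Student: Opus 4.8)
The plan is to use the Nishiyama-type lattice-theoretic method, adapted to the supersingular setting where the bijection promised in the abstract replaces the usual frame/orthogonal-complement correspondence. Concretely, for the supersingular K3 surface $X$ of Artin invariant $1$ in characteristic $2$, the \NeS\ lattice $\NS(X)$ is known to be the unique even lattice of rank $22$, signature $(1,21)$ and discriminant $-4$ (this is the supersingular K3 lattice $\Lambda_{2,1}$). A genus $1$ fibration on $X$ with a chosen zero section corresponds to an embedding of the hyperbolic plane $U$ (spanned by a fiber and the zero section), and the orthogonal complement $U^\perp \subset \NS(X)$ is then a negative-definite even lattice of rank $20$ and discriminant $4$ (equivalently, by sign change, a positive-definite even lattice of rank $20$ and discriminant $4$, which is the object referred to in the abstract). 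Conversely, any such lattice $W$ arises: $U \oplus W$ is an even lattice of rank $22$, signature $(1,21)$, discriminant $-4$, hence isometric to $\NS(X)$, and the image of $U$ gives a genus $1$ fibration (the distinction between elliptic and quasi-elliptic being read off from whether the frame admits a section with the right torsion, or equivalently from the root sublattice and the characteristic-$2$ quasi-elliptic criterion). So the theorem reduces to a purely arithmetic statement.

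The key step is therefore: \emph{classify the even positive-definite lattices of rank $20$ and discriminant $4$ up to isometry, and show there are exactly $18$.} First I would invoke the classification of the genus of such lattices. Since $20 \equiv 4 \pmod 8$ and the discriminant form has order $4$, the relevant genus is closely tied to the Niemeier lattices: a standard trick (as in Nishiyama's work) is that a rank-$20$ even lattice $W$ of discriminant $4$ with discriminant form $q$ can be glued to a fixed even lattice $T$ of rank $4$ and complementary discriminant form $-q$ to produce an even unimodular negative/positive-definite lattice of rank $24$, i.e.\ a Niemeier lattice $N$. Thus $W$ is recovered as the orthogonal complement $T^\perp$ inside some Niemeier lattice $N$, for a suitable primitive embedding $T \hookrightarrow N$. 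One then runs through the $23$ non-trivial Niemeier lattices (and possibly the Leech lattice, which will contribute nothing here since its automorphism group acts transitively in a way that either gives no embedding or a single one), enumerates the primitive embeddings of $T$ up to $\mathrm{Aut}(N)$ using the root system combinatorics of $N$ (Nishiyama's technique of embedding into the root lattices $A_n, D_n, E_n$ and bookkeeping the glue), and collects the resulting $T^\perp$'s, finally discarding isometric duplicates across different $N$.

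The main obstacle I expect is exactly this last bookkeeping: controlling overcounting. The same lattice $W$ can appear as $T^\perp$ in several Niemeier lattices and via several inequivalent embeddings, so after producing a raw list one must prove that the $18$ surviving isometry classes are pairwise distinct (using invariants: the root sublattice $W_{\mathrm{root}}$ and its ADE type, the discriminant form, the number of roots, and if necessary finer invariants like the index $[W : W_{\mathrm{root}}]$ and the glue code) and that the list is complete (no embedding has been missed). A secondary subtlety is the choice of the rank-$4$ lattice $T$: one wants $T$ with discriminant form opposite to the common discriminant form of the target lattices, but a priori rank-$20$ lattices of discriminant $4$ could have either of the two relevant discriminant forms (the one with a length-$4$ cyclic part, or $\langle 1/2\rangle \oplus \langle 1/2\rangle$-type), so one may need to handle two cases with two different auxiliary lattices $T$ and sum the counts. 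Once the arithmetic count of $18$ is in hand, translating back — matching each $W$ to an honest genus $1$ fibration on $X$ and checking that no two fibrations with the same frame are actually different as fibrations (they are not, since a fibration is determined by the class of its fiber up to automorphisms of $X$, and by Torelli-type results for supersingular K3 surfaces in odd-and-even characteristic $\mathrm{Aut}(X)$ realizes all isometries of $\NS(X)$ fixing the ample cone) — is the comparatively routine final step.
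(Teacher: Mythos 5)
Your overall strategy---reduce the theorem to a Kneser--Nishiyama count of definite even rank-$20$ lattices of discriminant $4$ by gluing a rank-$4$ partner lattice into the Niemeier lattices and then weeding out isometric duplicates---is essentially the paper's own route (the paper takes the partner to be $D_4$, sitting orthogonally to $D_{20}$ inside the Niemeier lattice with root system $D_{24}$, since $\NS(X)=U\oplus D_{20}$; this also settles your worry about two possible discriminant forms, because a frame must carry the fixed $2$-elementary discriminant form of $\NS(X)$). But there are two genuine gaps. The first is that you assume from the start that every genus~$1$ fibration has a section (``with a chosen zero section''), so your bijection with copies of $U$ in $\NS(X)$ only enumerates \emph{jacobian} fibrations, while the theorem counts all genus~$1$ fibrations; in characteristic $2$ a fibration could a priori have multisection index $2$ and would then be invisible to your count. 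The paper needs and proves Theorem~\ref{thm:jac}: if the multisection index is $m>1$, then $\langle \NS(X),F/m\rangle$ is an even overlattice of index $m$, of discriminant $-4/m^{2}$, forcing $m=2$; but then it would be an even unimodular lattice of signature $(1,21)$, impossible since $1-21\not\equiv 0\pmod 8$. Without this step your enumeration is not known to be exhaustive.

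The second gap is the uniqueness step. Your argument that two fibrations with isometric frames coincide ``by Torelli-type results for supersingular K3 surfaces in odd-and-even characteristic'' invokes exactly the tool the paper deliberately avoids: a crystalline Torelli theorem realizing the relevant isometries of $\NS(X)$ by automorphisms was not available in characteristic~$2$ (Ogus's theorem excludes small characteristics), and the paper explicitly declines to argue via $\mathrm{Aut}(X)$. Instead it proves uniqueness fibration by fibration (Proposition~\ref{prop}): the extremal elliptic ones are purely inseparable base changes of extremal rational elliptic surfaces by Ito's theorem, the non-extremal elliptic ones are pinned down through their $3$- or $4$-torsion via universal elliptic curves, and the quasi-elliptic ones are normalized in the form $y^2=x^3+a_4x+a_6$ using the quasi-elliptic discriminant $\Delta=a_4(a_4')^2+(a_6')^2$ and an analysis of the possible two-torsion sections. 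Some such substitute for Torelli is indispensable to conclude that each of the $18$ frames is realized by exactly one fibration up to isomorphism; as written, your final ``comparatively routine'' step is the part that carries most of the paper's work.
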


A crucial ingredient of our main result is Theorem \ref{thm:jac}
stating that any genus 1 fibration on $X$ admits a section.
The classification of all possible fibrations is then achieved
in Section \ref{s:class} by lattice theoretic means
\emph{\`{a} la} Kneser-Nishiyama (cf.~Section \ref{s:Nishi}).
We also determine whether the fibrations are elliptic or quasi-elliptic
using a criterion developed in Section \ref{s:e-qe} (Theorem \ref{thm:e-qe}). 
The existence of these fibrations on~$X$ is established by exhibiting
an explicit Weierstrass form over the prime field $\F_2$ for each of them.
We shall furthermore connect all fibrations by explicit isomorphisms
over $\F_4$ (usually even over $\F_2$, but we shall see that
this is not always possible).
Equations and isomorphisms are given in Section \ref{s:eq}.
The uniqueness part of Theorem \ref{thm} is proven in Section \ref{s:unique}
by working with explicit Weierstrass equations.
Section \ref{s:config} shows that some specific $(-2)$ curves on $X$
generate the incidence graph of points and lines in $\PP^2(\F_4)$.
We derive some surprising consequences for configurations in  $\PP^2(\F_4)$
such as the absence of a $14$-cycle.
The paper concludes with comments on the implications of our classification
for reduction from characteristic zero.

\section{The supersingular K3 surface in characteristic  2 with
Artin invariant 1}
\label{s:ss}

On an algebraic surface $S$,
we consider the \NeS\ group $\NS(S)$ consisting of divisors
up to algebraic equivalence.
The \NeS\ group is finitely generated and abelian; its
rank  is called the Picard number of $S$ and denoted by $\rho(S)$.
The intersection form endows $\NS(S)$ with the structure of an
integral lattice up to torsion.
By the Hodge index theorem, this lattice has signature $(1,\rho(S)-1)$.
On a K3 surface, algebraic and numerical equivalence are the same.
Hence $\NS(S)$ is torsion-free and thus a lattice in the strict
sense.

In characteristic zero,
Lefschetz' theorem bounds the Picard number by the central Hodge
number:
\begin{eqnarray}
\label{eq:Lef}
\rho(S) \leq h^{1,1}(S).
\end{eqnarray}
In positive characteristic, however, we have only
Igusa's theorem which gives the weaker upper bound:
\begin{eqnarray}
\label{eq:Igusa}
\rho(S)\leq b_2(S).
\end{eqnarray}
Surfaces attaining equality in the former bound (\ref{eq:Lef})
are sometimes called singular (in the sense of exceptional,
as with elliptic curves said to be ``singular'' when they have
complex multiplication).  Equality in the latter bound~(\ref{eq:Igusa})
leads to Shioda's notion of supersingular surfaces.

For K3 surfaces, one has $h^{1,1}(S)=20$ and $b_2(S)=22$.
Supersingular K3 surfaces were studied by Artin in \cite{Artin}.
In particular he proved that for a supersingular K3 surface in
characteristic $p$, the \NeS\ group $\NS(S)$ has discriminant
\begin{eqnarray}
\label{eq:disc}
\mbox{disc}(\NS(S))=-p^{2\sigma}, \;\;\; 1\leq \sigma\leq 10.
\end{eqnarray}
Here $\sigma$ is usually called the Artin invariant of $S$.
Artin also derived a stratification of the moduli space of supersingular
K3 surfaces in terms of~$\sigma$.
This classification was later complemented by Ogus
who proved that there is a unique supersingular K3 surface with
$\sigma=1$ over the algebraic closure of the base field \cite{Ogus}
(see \cite{RS2} for characteristic $2$).

From here on we specialize to characteristic $p=2$.
There are several known models for the unique supersingular K3
surface $X$ with $\sigma=1$ (e.g.~\cite{DK}, \cite{KK}, \cite{RS2}, \cite{S-MJM}).
For instance one can take the following~genus one fibration
from \cite{DK} with affine parameter $t\in\PP^1$:
\[
X:
y^2 = x^3 + t^3x^2+t.
\]
This fibration is quasi-elliptic, i.e.~all fibers are singular curves (see Section \ref{s:g=1}),
but it has only one reducible fiber.
The special fiber is located outside the affine chart on the base curve $\PP^1$,
at $t = \infty$, and has Kodaira type $I_{16}^*$.
It follows that there can be no sections other than the zero section
$O$, and that
\[
\NS(X) = U \oplus D_{20}.
\]
This fibration will reappear in our classification in Sections
\ref{s:class}--\ref{s:eq} as \#18.
Note that a singular fiber of type $I_{16}^*$ is impossible for
a jacobian genus~$1$ fibration on any K3 surface outside characteristic
two, for otherwise
the surface would contradict either \eqref{eq:Lef} or \eqref{eq:disc}.
In comparison, for an elliptic K3 surface in characteristic two,
the maximal singular fiber types are $I_{13}^*$ and $I_{18}$ by
\cite{S-max}.


\section{Genus one fibrations}
\label{s:g=1}

A genus~$1$ fibration on a smooth projective surface $S$ is a surjective
morphism onto a smooth curve $C$\/ such that the general fiber $F$
is a curve of arithmetic genus~$1$.
If the characteristic is different from $2$ and $3$, then this
already implies that $F$ smooth.
In the presence of a section, $F$\/ is an elliptic curve;
hence these fibrations are called elliptic.
In characteristics $2$ and $3$, however, $F$\/ need not be smooth,
it may be a cuspidal rational curve.
Such a fibration is called quasi-elliptic.

For general properties of genus~$1$ fibrations (mostly elliptic),
the reader is referred to the recent survey \cite{SSh} and the
references therein, specifically \cite{CD}.
We shall review a few more details about quasi-elliptic fibrations
in Section \ref{s:unique}.
Here we only recall two useful formulas.
The first computes the Euler-Poincar\'e characteristic $e(S)$ through
the (reducible) singular fibers.
The sum includes a local correction term that accounts for
the wild ramification $\delta_v$ in the case of an elliptic surface,
and for the non-zero Euler-Poincar\'e characteristic of the general fiber
in the case of a quasi-elliptic surface:
\begin{itemize}
\item
$S$ elliptic:\phantom{-quasi}\;\;\,
$
 e(S) = \sum_{v\in C} (e(F_v) + \delta_v)
 $,
 \item
 $S$ quasi-elliptic:\;\;
$
 e(S) = e(C)e(F) + \sum_{v\in C} (e(F_v) - 2).
$
\end{itemize}
The Shioda-Tate formula concerns jacobian genus~$1$ fibrations.
It asserts that the \NeS\ group is generated by fiber components and sections.
Outside the \MoW\ group, the only relation is that any two
fibers are algebraically equivalent.

In order to find a genus~$1$ fibration on a K3 surface, it suffices
to find a divisor $D$\/ of zero self-intersection $D^2=0$ by \cite{PSS}.
Then either $D$\/ or $-D$\/ is effective by Riemann-Roch, and the linear system
$|D|$ or $|-D|$ induces a genus~$1$ fibration (usually elliptic).
If the divisor $D$ has the shape of a singular fiber from Kodaira's list,
then it in fact appears as a singular fiber of the given fibration.
Moreover, any irreducible curve $C$ with $C\cdot D=1$ gives a section
of the fibration.

In the K3 case, any curve has even self-intersection by the adjunction
formula, so $C^2$ is even.  Hence $C$ and $D$ span the hyperbolic plane $U$.
In summary, a jacobian elliptic fibration on a K3 surface is realized
by identifying a copy of $U$\/ inside $\NS$.  (Warning: in general it might
not be the copy of~$U$\/ we started with, because the sections of~$D$\/ may
have a base locus.  But it is always the image of the original copy of~$U$\/
under an isometry of $\NS(S)$.)
We now prove a result which implies that any genus one fibration on $X$ is jacobian:

\begin{Theorem}
\label{thm:jac}
Any genus 1 fibration on a supersingular K3 surface of Artin invariant 1 admits a section.
\end{Theorem}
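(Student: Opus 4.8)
The plan is to convert the existence of a section into a condition on the divisibility of the fibre class in $\NS(X)$ and then exploit the fact that $\NS(X)$ has the smallest possible discriminant. First I would recall the standard dictionary. Any genus~$1$ fibration $f\colon X\to C$ on a K3 surface has base $C\cong\PP^1$, since $f_*\OO_X=\OO_C$ and the Leray sequence embeds $H^1(C,\OO_C)$ into $H^1(X,\OO_X)=0$; its general fibre $F$ is reduced and irreducible, and because $K_X=\OO_X$ forbids multiple fibres, the class $[F]\in\NS(X)$ is a primitive isotropic vector. Let $m$ be its divisibility, i.e.\ the positive generator of $\{[F]\cdot D: D\in\NS(X)\}\subseteq\Z$. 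I would then show $f$ has a section iff $m=1$: one direction is clear; for the other, starting from $D_0$ with $D_0\cdot[F]=1$ one replaces $D_0$ by $D_0+n[F]$ to arrange $D^2\ge -2$, so that $D$ (and not $-D$, as $D\cdot[F]=1>0$ while $[F]$ is nef) is effective by Riemann--Roch on the K3 surface; the unique horizontal component of $D$ has relative degree~$1$ over the smooth curve~$C$, hence maps isomorphically onto $C$ by Zariski's main theorem, and so is a section. This reduction is uniform and makes no use of smoothness of the fibres, so it covers the quasi-elliptic case as well.

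So everything reduces to proving $m=1$. The key ingredient is a lattice lemma: for a primitive isotropic vector $e$ in an even lattice $L$ of signature $(1,n)$, the quotient $N_e:=e^{\perp}/\Z e$ is an even lattice which is negative definite of rank $n-1$ (negative semidefiniteness of $e^\perp$ and that its radical is exactly $\Z e$ come from the Hodge index theorem), and $\operatorname{disc}(L)=-m^{2}\operatorname{disc}(N_e)$. For the discriminant formula one writes $L=e^{\perp}\oplus\Z g$ as abelian groups with $g\cdot e=m$ (using that $e^{\perp}$ is saturated, so $L/e^{\perp}\cong\Z$), picks a basis of $e^{\perp}$ beginning with $e$ and projecting to a basis of $N_e$, and expands the Gram determinant: the only nonzero entry in the $e$-row is $g\cdot e=m$, and one more expansion again produces a factor $m$, leaving $\pm m^{2}\det G(N_e)$. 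Applying this with $L=\NS(X)$, which has rank $22$, signature $(1,21)$ and discriminant $-4$ (Artin invariant~$1$, $p=2$), yields $m^{2}\operatorname{disc}(N_e)=4$ with $N_e$ even, negative definite of rank $20$, so $\operatorname{disc}(N_e)>0$. Hence $m\in\{1,2\}$, and if $m=2$ then $N_e$ would be an even unimodular definite lattice of rank $20$ --- impossible, since an even unimodular definite lattice has rank divisible by~$8$. Therefore $m=1$ and $f$ admits a section.

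The main obstacle is less a computation than making sure the argument is deployed at exactly the right tightness: it uses $\lvert\operatorname{disc}\NS(X)\rvert=4$ and the fact that $20$ is not divisible by $8$ in an essential way, and both inputs are needed. I expect the write-up to be short but to emphasize this; in particular it is worth noting (perhaps as a remark) that the conclusion genuinely fails for Artin invariant $\sigma\ge 2$, where one only obtains $m\mid 2^{\sigma-1}$ and $m=2$ can occur (e.g.\ $N_e$ could be the negative-definite $D_{20}$), which is consistent with the paper's theme that the whole classification is special to $\sigma=1$.
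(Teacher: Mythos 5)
Your proof is correct and follows essentially the same route as the paper: reduce to showing the multisection index $m$ of the (primitive, since there are no multiple fibres) class $F$ equals $1$, use $\operatorname{disc}\NS(X)=-p^2$ to force $m\in\{1,p\}$, and rule out $m=p$ via the signature-mod-$8$ obstruction for even unimodular lattices. The only cosmetic differences are that you pass to the negative definite quotient $e^{\perp}/\Z e$ of rank $20$ where the paper forms the index-$m$ even overlattice $\langle \NS(X),F/m\rangle$ of signature $(1,21)$, and that you specialize to $p=2$ (disc $=-4$) while the paper runs the identical argument for general $p$.
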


\begin{proof}
Let $X$ denote the supersingular K3 surface of Artin invariant 1 in characteristic $p$.
Given a genus 1 fibration, we denote the class of a fiber by $F$ 
and the multisection index by $m\in\N$.
That is, 
\[
m\Z = \{D.F, \;\; D\in\NS(X)\}.
\]
Then the fibration has a section if and only if $m=1$. Assume $m>1$.
Then $F/m\in\NS(X)^\vee$, and in fact
\[
N:= \langle \NS(X), F/m\rangle \;\; \text{is an even integral lattice,}
\]
since $F^2=0$.
Presently $F$ is indivisible in $\NS(X)$ since there cannot be any multiple fibers 
by the canonical bundle formula (see \cite[Thm.~6.8]{SSh}).
Hence $\NS(X)$ has index $m$ in $N$ from which we infer
\[
\mbox{disc} (N) = \mbox{disc}(\NS(X))/m^2.
\]
Since the discriminant is an integer,
it follows at once that $m=p$.
But even then, $N$ is a unimodular lattice of signature $(1,21)$ 
which gives a contradiction.
\end{proof}

\begin{Remark}
The above argument may be applied to any elliptic surface with indivisible fiber class.
In fact, one may compare Keum's result for complex elliptic K3 surfaces \cite{Keum}
which states in the analogous notation that $\NS(\mbox{Jac}(X))=N$.
\end{Remark}


Throughout this paper we shall employ the following terminology.
Kodaira's notation for singular fibers of type $I_n$ (and $III, IV$) will be used
interchangeably with the corresponding extended Dynkin diagrams
$\tilde A_{n-1}$ or the root lattices $A_{n-1}$,
and likewise for $\td_n, D_n (n\geq 4)$ and $\te_n, E_n (n=6,7,8)$.
In principle, there is an ambiguity for $A_1$ and $A_2$,
but throughout this paper the root lattice will in fact
determine the fiber type uniquely.
The zero section will be denoted by $O$.
The fiber component meeting $O$ is called the identity component.
For other simple components, we use the self-explanatory termini
far component ($\td_n (n>4), \te_6, \te_7$), near component ($\td_n
(n>4)$) and opposite component as well as
even and~odd components ($\ta_n, n$ odd).

\section{Elliptic vs.~quasi-elliptic fibrations}
\label{s:e-qe}

We have already mentioned the subtlety in characteristics $p=2$ and $3$
that there are quasi-elliptic fibrations.
This brings us to the question how to detect from $\NS=U+M$
whether the corresponding  genus~$1$ fibration is elliptic
or quasi-elliptic.  In this section, we shall discuss a few criteria.

A first criterion comes from the singular fibers:
namely a quasi-elliptic fibration does not admit multiplicative fibers.
The additive fiber types are also restricted:
\begin{itemize}
\item
no
$IV, IV^*, I_n^* ~(n>0$ odd$)$ in characteristic $2$,
\item
no $III, III^*$ or $I_n^* ~(n\geq 0)$ in characteristic $3$.
\end{itemize}
The Euler-Poincar\'e characteristic gives a second simple approach
to distinguish elliptic and  quasi-elliptic fibration:
on a quasi-elliptic fibration, only the reducible singular fibers
contribute to $e(X)$ (which can also be computed as alternating
sum of Betti numbers or with Noether's formula).
If the sum over the fibers indeed returns the right number, then
we can compare to the sum without the correction terms for the
general fiber (plus possibly wild ramification which necessarily
is non-zero for certain fiber types by \cite{SS2}).
If the latter sum exceeds $e(X)$, then the fibration cannot be
elliptic.
This criterion can be very useful because the reducible singular
fibers are visible in $\NS(X)$ by the Shioda-Tate formula.

The perhaps  most general approach relies on the fact that quasi-elliptic
surfaces are always unirational, hence supersingular.
On the other hand, the $\MW$-group of a quasi-elliptic fibration
is always finite and in fact $p$-elementary (i.e.~isomorphic to $(\Z/p\Z)^r$ for some $r\in\N$).
This leads to the following criterion:

\begin{Theorem}[Rudakov-Shafarevich {\cite[\S4]{RS}}]
\label{Thm:PSS}
Given a genus~1 fibration on some algebraic surface $X$
with $\chi(\mathcal O_X)>1$ in characteristic
$p$, not necessarily jacobian.
This fibration is
quasi-elliptic if and only if the following conditions are satisfied:
\begin{enumerate}[(i)]
\item
$p=2,3$,
\item
the root lattice of each reducible fiber has $p$-elementary discriminant group,
\item
the fiber components generate a sublattice of  $\NS(X)$ of corank
one.
\end{enumerate}
\end{Theorem}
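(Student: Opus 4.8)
The plan is to prove the equivalence by establishing each implication, and in both directions I would first pass to the Jacobian of the fibration: this is again a genus~$1$ fibration, of the same type (being elliptic or quasi-elliptic is a property of the generic fiber), with the same $\chi$ and, away from any multiple fibers, the same reducible fiber types, so that one may assume the fibration is jacobian and invoke Shioda-Tate. Write $O$ for the zero section, $F$ for the general fiber class, and $L_v$ for the root lattice of the reducible fiber over $v$. For the ``only if'' direction, suppose the fibration is quasi-elliptic. Condition~(i) holds because a regular genus~$1$ curve over a field that is not smooth can occur only in characteristics~$2$ and~$3$. Condition~(iii) comes from the fact that a quasi-elliptic surface is unirational, hence supersingular, and has finite $\MW$-group: then Shioda-Tate gives $\rho(X)=2+\sum_v(m_v-1)$, so that $F$ and the non-identity fiber components span a lattice of rank $\rho(X)-1$, and since $O$ is independent of it the fiber components generate a sublattice of corank exactly one. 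Condition~(ii) is read off from the classification of quasi-elliptic fibers: in characteristic~$2$ the reducible types are $\ta_1$ (type $III$), the $\td_{2k}$, $\te_7$ and $\te_8$, and in characteristic~$3$ they are $\ta_2$ (type $IV$), $\te_6$ and $\te_8$; in each case the discriminant group is $p$-elementary, and the additive types excluded in the list recalled just before the theorem are exactly those with non-$p$-elementary discriminant group.

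For the converse, assume (i)--(iii) hold; the point is to rule out that the fibration is elliptic. By (iii), $F$ together with the non-identity fiber components spans a lattice of rank $\rho(X)-1=1+\sum_v(m_v-1)$, so Shioda-Tate forces the $\MW$-rank to vanish. Together with (ii) this means the trivial lattice $U\oplus\bigoplus_v L_v$ sits inside $\NS(X)$ with finite $p$-elementary cokernel (namely $\MW$, whose torsion injects into $\bigoplus_v L_v^\vee/L_v$), so that $\operatorname{disc}(\NS(X))$ is, up to sign, a power of $p$. It remains to show that the generic fiber $E$ over $k(C)$ cannot be a smooth elliptic curve: that is, that an elliptic curve over $k(C)$ with finite \MoW\ group and reduction types restricted as in~(ii) is incompatible with $\chi(\OO_X)>1$.

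I expect this last step to be the main obstacle, and it is where the hypothesis $\chi(\OO_X)>1$ is really used. The route I would take combines the Euler-Poincar\'e identity $12\chi(\OO_X)=e(X)=\sum_v(e(F_v)+\delta_v)$ with $\rho(X)=2+\sum_v(m_v-1)$ to bound the number and types of the singular fibers, and then appeals to the classification of genus~$1$ fibrations with finite \MoW\ group in positive characteristic (this is the substance of Rudakov-Shafarevich): under the $p$-elementary constraint~(ii), every configuration of elliptic type either forces $\rho(X)>b_2(X)$, or violates the known restrictions on \MoW\ torsion, or else only occurs with $\chi=1$, so that no genuinely elliptic possibility survives when $\chi>1$. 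An alternative, more geometric approach is to argue directly on $E$: the restricted reduction together with vanishing $\MW$-rank should force a purely inseparable base change to birationally trivialize the fibration, which is precisely the assertion that $E$ is not smooth. Either way, this is the delicate point; the forward direction and the reduction to the rank-zero case above are straightforward bookkeeping with Shioda-Tate and the list of admissible quasi-elliptic fibers.
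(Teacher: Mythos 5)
First, a point of orientation: the paper does not prove this theorem at all --- it is quoted as a known result of Rudakov--Shafarevich \cite[\S4]{RS}, so there is no in-paper argument to measure your attempt against. Judged on its own terms, your forward direction is essentially complete and correct: the reduction to the jacobian case, the derivation of (iii) from finiteness of the \MoW\ group via Shioda--Tate, and the verification of (ii) from the list of admissible quasi-elliptic fiber types (whose root lattices $A_1$, $D_{2k}$, $E_7$, $E_8$ in characteristic $2$ and $A_2$, $E_6$, $E_8$ in characteristic $3$ all have $p$-elementary discriminant groups) are all sound, granted the classification of quasi-elliptic fibers, which the paper itself recalls just before the statement.

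The converse, however, contains a genuine gap, and it sits exactly where you say it does. After deducing that the \MoW\ rank is zero and that $\operatorname{disc}(\NS(X))$ is a power of $p$, you still must show that no \emph{elliptic} fibration with $\chi(\OO_X)>1$ can satisfy (i)--(iii), and neither of your two proposed routes closes this. The first route ``appeals to the classification of genus~1 fibrations with finite \MoW\ group in positive characteristic (this is the substance of Rudakov--Shafarevich)'' --- but that classification \emph{is} the content of the theorem you are trying to prove, so this is circular as written; if instead you mean Ito's result that extremal elliptic surfaces in characteristic $p$ arise by purely inseparable base change from extremal rational elliptic surfaces (which is how the paper derives its Proposition~\ref{prop:qe}), you would need to actually run through those base changes and check that every surviving elliptic configuration violates (ii) --- a concrete case analysis you have not carried out, and one that moreover postdates Rudakov--Shafarevich. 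The second route (``the restricted reduction together with vanishing \MoW\ rank should force a purely inseparable base change to birationally trivialize the fibration'') is a statement of the desired conclusion, not an argument. So the proposal establishes one implication cleanly and correctly identifies, but does not supply, the hard half of the other.
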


Specifically this implies for a jacobian quasi-elliptic fibration
that the \MoW\ group is $p$-elementary because the fibers
do not accommodate any higher torsion.
We shall now discuss whether this last property already determines
if the fibration is quasi-elliptic.

If the quasi-elliptic fibration from Theorem \ref{Thm:PSS} is jacobian,
then condition $(iii)$ requires that the fibration is extremal.
In general this means that the Picard number is maximal (relative
to the inequality \eqref{eq:Lef} or \eqref{eq:Igusa} depending
on the characteristic) while the \MoW\ group is finite.

Extremal elliptic surfaces are much more special in positive characteristic
than in characteristic zero.
In fact, Ito showed that in characteristic $p$ extremal elliptic
surfaces do always arise through purely inseparable base change
from rational elliptic surfaces \cite{Ito2}.
(Thus they are again unirational.)
Going through all extremal rational elliptic surfaces and their
purely inseparable base changes,
one can thus deduce the following solution to the above problem:

\begin{Proposition}
\label{Prop}
\label{prop:qe}
Let $X$ be a jacobian genus~1 fibration of a supersingular surface in characteristic~$2$.
If the \MoW\ group of the fibration is \hbox{$2$-elementary}
then $X$\/ is either a rational elliptic surface or quasi-elliptic.
\end{Proposition}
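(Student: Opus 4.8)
The plan is to reduce the statement to a finite check by invoking Ito's theorem on extremal elliptic surfaces. First I would dispose of the easy direction: if the fibration is quasi-elliptic, then it is automatically extremal (condition (iii) of Theorem~\ref{Thm:PSS}), and its \MoW\ group is $2$-elementary by the remark following that theorem, so there is nothing to prove. The content is the converse, so assume from now on that $X$ carries a jacobian genus~1 fibration with $2$-elementary \MoW\ group, and suppose it is elliptic (not quasi-elliptic); I must show $X$ is a rational elliptic surface. Since $X$ is supersingular, the fibration is extremal: the Picard number is maximal, and a $2$-elementary \MoW\ group is in particular finite.

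Next I would feed this into Ito's result \cite{Ito2}: an extremal elliptic surface in characteristic~$p$ arises by a purely inseparable base change $\PP^1\to\PP^1$ of degree a power of~$p$ from an extremal \emph{rational} elliptic surface. Here $p=2$. So $X$ is the (minimal resolution of the) pullback of one of the finitely many extremal rational elliptic surfaces in characteristic~$2$ under a Frobenius-type base change $t\mapsto t^{2^k}$. If $k=0$ then $X$ itself is that rational elliptic surface and we are done; so I would assume $k\geq 1$ and derive a contradiction with the $2$-elementary hypothesis on \MoW. The strategy is: Frobenius base change multiplies Euler numbers by $2^k$, so to land on a K3 (or a surface of general type — but $\chi(\OO_X)>1$ forces at least K3-type, actually for a genus~1 fibration over $\PP^1$ it must be exactly a K3, $\chi=2$) we need $2^k\cdot e(\text{base surface})$ to be compatible with $e=24$; since a rational elliptic surface has $e=12$, only $k=1$ occurs, and $X$ is the K3 obtained from an extremal rational elliptic surface by one Frobenius base change. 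Now I would go through the (short) list of extremal rational elliptic surfaces in characteristic~$2$ together with their \MoW\ groups, track how the singular fibers and the \MoW\ group transform under the degree-$2$ inseparable base change, and check in each case whether the resulting K3 fibration is elliptic with $2$-elementary \MoW. The point is that in every such case the base-changed fibration is either quasi-elliptic or has \MoW\ group containing an element of order $>2$ (e.g.\ a $\Z/4$ or $\Z/3$ torsion section gets inherited), so the only elliptic survivors with $2$-elementary \MoW\ are the rational ones themselves, i.e.\ $k=0$.

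The main obstacle is the bookkeeping in the last step: one must know the classification of extremal rational elliptic surfaces over a field of characteristic~$2$ (this is classical, going back to Lang and to Ito), and one must correctly compute the effect of the purely inseparable base change on each singular fiber — multiplicative fibers $I_n$ can become $I_{2n}$ or acquire additive reduction, additive fibers transform by the known local tables, and wild ramification appears. I would organize this as a table: for each extremal rational elliptic surface, its fiber configuration and \MoW\ group, then the fiber configuration and \MoW\ group of the Frobenius base change, then a one-line verdict (quasi-elliptic, or \MoW\ not $2$-elementary, or — impossible here — elliptic with $2$-elementary \MoW). The verification that no case in the last column is nonempty is exactly what yields the dichotomy in the Proposition. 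A secondary subtlety is making sure the hypothesis $\chi(\OO_X)>1$ is used to exclude the rational-surface case from the ``quasi-elliptic'' conclusion only when appropriate: the statement allows $X$ rational, so really the Proposition is just the contrapositive packaging of ``$k\geq 1$ $\Rightarrow$ quasi-elliptic or \MoW\ not $2$-elementary''.
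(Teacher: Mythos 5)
Your overall strategy -- reduce to extremality, invoke Ito's theorem that extremal elliptic surfaces in characteristic $p$ arise from (extremal) rational elliptic surfaces by purely inseparable base change, and then run a finite check over that list -- is exactly the route the paper takes; the paper in fact offers no more detail than the sentence ``going through all extremal rational elliptic surfaces and their purely inseparable base changes, one can thus deduce\dots''. So the skeleton is right. However, your reduction of the finite check to a \emph{single} Frobenius base change ($k=1$) rests on two false claims, and this is a genuine gap.

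First, a purely inseparable base change does \emph{not} multiply the Euler number by its degree. The base change $t\mapsto t^{p}$ is a bijection on points of $\PP^1$, and the Euler number of the pullback is governed by how the local fiber types and wild ramification indices transform, which is not multiplicative; the paper's own Remark~\ref{Rem:3} illustrates this in characteristic $3$, where $y^2+xy+tx=x^3$ pulled back by $t\mapsto t^{3^e}$ has configuration $I_{3^e}+IV^*$ or $I_{3^e}+II$, so the Euler number is $12$ for $e=1$ \emph{and} $e=2$ and only jumps to $36$ at $e=3$. Second, the Proposition is stated for an arbitrary supersingular surface, not a K3; a jacobian genus~$1$ fibration over $\PP^1$ can have any $\chi(\OO)\geq 1$, so your parenthetical ``for a genus~1 fibration over $\PP^1$ it must be exactly a K3, $\chi=2$'' is wrong, and surfaces obtained by higher iterates $t\mapsto t^{2^k}$, $k\geq 2$, are squarely within the scope of the statement. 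The fix is to run your table over \emph{all} powers of Frobenius (as the paper implicitly does), or at least to argue that one may take the base-change source to be the last rational surface in the Frobenius tower and then handle whatever Euler numbers actually occur after one more step -- but as written, your finite check omits cases and the truncation is justified by an incorrect Euler-number computation.
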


\begin{Remark}
\label{Rem:3}
In characteristic $3$, an analogous classification holds true
with one series of surfaces added:
elliptic surfaces with exactly two singular fibers, one of them
of type
$I_{3^e}$ for some $e\in\N$ and the other of type $II$ if $e$ is
even, or $IV^*$ if $e$ is odd (with wild ramification of index
one).
These surfaces arise from the rational elliptic surface
$y^2 +xy+tx=x^3$ through the purely inseparable base change
$t\mapsto t^{3^e}$.
Note that 
these elliptic fibrations are easy
to distinguish from quasi-elliptic fibrations thanks to the multiplicative
fiber at $t=0$.
\end{Remark}

\begin{Theorem}
\label{thm:e-qe}
Let $X$ be a K3 surface over an algebraically closed field of characteristic $p$.
Then a given jacobian genus~1 fibration on $X$ is quasi-elliptic
iff $p=2,3$, $X$ is supersingular and $\MW=(\Z/p\Z)^r$ for some $r\in\N$.
\end{Theorem}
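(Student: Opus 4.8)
The plan is to deduce Theorem~\ref{thm:e-qe} by combining Theorem~\ref{Thm:PSS} (the Rudakov--Shafarevich criterion) with Proposition~\ref{prop:qe}, using the fact that on a K3 surface the genus~1 fibration is automatically non-rational and has $\chi(\OO_X)=2>1$. First I would establish the easy direction: if the fibration is quasi-elliptic, then by Theorem~\ref{Thm:PSS} we immediately get $p=2$ or $3$; moreover a quasi-elliptic surface is unirational, hence supersingular, so $X$ is supersingular; and the Mordell--Weil group is $p$-elementary by the remark following Theorem~\ref{Thm:PSS} (the torsion in $\MW$ injects into the fiber component groups, which by condition~(ii) are $p$-elementary, while condition~(iii) forces $\MW$ finite, hence $\MW=(\Z/p\Z)^r$).

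For the converse, suppose $p=2$ or $3$, $X$ is supersingular, and $\MW=(\Z/p\Z)^r$. In characteristic~2, Proposition~\ref{prop:qe} applies directly once we observe that $X$, being a K3 surface, is certainly not a rational surface; hence the fibration is quasi-elliptic. In characteristic~3 I would invoke the analogue of Proposition~\ref{prop:qe} described in Remark~\ref{Rem:3}: the only extra possibility is the series of elliptic surfaces with a multiplicative fiber of type $I_{3^e}$ at $t=0$, but such a surface has a multiplicative singular fiber, so its $\MW$ contains a free part or, more to the point, one checks these surfaces are \emph{not} K3 (the base change $t\mapsto t^{3^e}$ of $y^2+xy+tx=x^3$ has $\chi=1$ unless $e=1$, and even for $e=1$ the resulting surface is rational, not K3). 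Thus on a K3 surface this exceptional series cannot occur, and the remaining alternative in the characteristic-3 analogue of Proposition~\ref{prop:qe} is again ``quasi-elliptic.''

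The one point needing a little care is the passage from ``$\MW$ is $2$-elementary'' to the hypotheses of Proposition~\ref{prop:qe}: that proposition is stated for jacobian genus~1 fibrations on a supersingular \emph{surface}, and we must confirm its hypothesis ``$\MW$ is $2$-elementary'' is exactly what we assumed, which it is. I expect the main (minor) obstacle to be bookkeeping in characteristic~3: one must rule out the Remark~\ref{Rem:3} series on K3 surfaces, which comes down to noting that a K3 surface has $b_2=22$ and $\rho\le 22$, whereas the members of that series are rational or have smaller second Betti number, so they are never K3; equivalently, a jacobian elliptic K3 surface with a single multiplicative fiber $I_{3^e}$ and one additive fiber would violate the Shioda--Tate count against $\rho(X)\le 22$. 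Once this is observed, the theorem follows formally from the two preceding results.
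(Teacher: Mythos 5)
Your proposal is correct and follows essentially the same route as the paper: the forward direction from the general properties of quasi-elliptic fibrations recorded after Theorem~\ref{Thm:PSS}, and the converse from Proposition~\ref{prop:qe} in characteristic~$2$ together with ruling out the exceptional series of Remark~\ref{Rem:3} in characteristic~$3$ by inspecting the Euler--Poincar\'e characteristic. (Your parenthetical bookkeeping for that series is slightly off --- one finds $\chi=1$ for $e=1,2$ and $\chi=3,7,21,\dots$ for $e\ge 3$, so $\chi$ is never $2$ --- but this only reinforces the conclusion you draw, so the argument stands.)
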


\begin{proof}
Quasi-elliptic fibrations only occur in the specified characteristics.
For $p=2$, the theorem follows from Proposition \ref{prop:qe}.
For $p=3$, we also have to take into account the extra case from Remark \ref{Rem:3}.
But this series of surfaces avoids K3 surfaces
by inspection of the Euler-Poincar\'e characteristic, so the claim follows.
\end{proof}

The theorem (as well as the preceeding proposition) 
is useful from the lattice theoretic viewpoint
for the following reason:
As we have seen in the previous section, a jacobian genus~$1$ fibration
on an algebraic surface $X$ corresponds to a decomposition
of the \NeS\ lattice $\NS(X)=U+M$.
Here $M$ is often called the essential lattice.
If $\chi(\OO_X)>1$, then $M$
together with its root type determines the structure of 
the singular fibers and the \MoW\
group \cite{ShMW}.
Since a K3 surface has $\chi=2$,
we can thus deduce from the essential lattice $M$ whether a given jacobian
genus~$1$ fibration on a K3 surface in characteristic $2$ or $3$ is elliptic
or quasi-elliptic.
%

\section{Kneser-Nishiyama method}
\label{s:Nishi}

In \cite{Nishi}, Nishiyama introduced a lattice theoretic approach
to classify all jacobian elliptic fibrations on a complex (elliptic)
K3 surface.
The method is based on gluing techniques of Kneser and Witt \cite{Kneser}
and the classification of Niemeier lattices, i.e.~
negative-definite unimodular lattices of rank $24$.
By \cite{Nie}, there are 24 such lattices, and each is determined
by its root type.
In fact, except for the Leech lattice, the root type has always
finite index in the unimodular lattice.

For a complex K3 surface $X$, one has $\NS(X)$ of rank $\rho(X)\leq
20$.
The transcendental lattice $T(X)$ is defined as the orthogonal
complement of $\NS(X)$ in $H^2(X, \Z)$ with respect to cup-product:
\[
T(X) = \NS(X)^\bot \subset H^2(X,\Z).
\]
Since $H^2(X,\Z)$ has signature $(3,19)$, the signature of $T(X)$
is $(2, 20-\rho(X))$.
The information how to glue together $\NS(X)$ and $T(X)$ in the
unimodular lattice $H^2(X,\Z)$ is encoded in the isomorphism of
the discriminant forms:
\[
q_{\NS(X)}\0 \cong -q_{T(X)}\0.
\]
One now looks for a partner lattice $L$ of $T(X)$ with rank $26-\rho(X)$
such that $L$ is negative definite of discriminant form $q_L=q_{T(X)}$.
Such a lattice exists by lattice theory
\emph{\`{a} la} Nikulin (cf.~\cite{Nishi-Saitama}).
Then one determines all primitive embeddings of $L$ into Niemeier
lattices $N$.
For each embedding $L\hookrightarrow N$, the orthogonal complement
$M=L^\bot\subset N$ is a candidate for the essential lattice of
a jacobian elliptic fibration on $X$.

To show that $X$ does indeed admit an elliptic fibration with essential
lattice $M$, one notes that by construction the lattices $\NS(X)$
and $U+M$ have the same signature and discriminant form.
Thanks to the copy of the hyperbolic plane, these conditions imply
that the lattices are isomorphic.
But then the representation of $\NS(X)$ as $U+M$ induces a jacobian
elliptic fibration on $X$ with essential lattice $M$, as we explained
in Section \ref{s:g=1}.

Note that the same approach is not guaranteed to work in characteristic
$p>0$.
Indeed, consider supersingular K3 surfaces of Artin invariant $\sigma>2$.
Here $\NS(X)$ is $p$-elementary; hence its discriminant group has
length $2\sigma$.
Assume that $\NS(X)=U+M$, and that $M$ is embedded primitively
into some unimodular lattice $N$.
Then the discriminant group $G_L$ of its orthogonal complement
$L$ has the same length $2\sigma$.
In particular we can estimate the rank of $N$ by
\[
\mbox{rank}(N) = \mbox{rank}(M) + \mbox{rank}(L) \geq \mbox{rank}(M)
+ \mbox{length}(G_L) = 20 + 2\sigma >24.
\]
However, we can still try to pursue the same approach
for supersingular K3 surfaces with Artin invariant $\sigma\leq
2$.
This only requires to find a suitable partner lattice $L$ for $\NS(X)$.
In the present situation,
we have already mentioned that one way to write $\NS(X)$ is
$\NS(X) = U \oplus D_{20}$.
Hence we can choose $L=D_4$.
In fact, the Niemeier lattice with root system $D_{24}$ contains
 $D_4$ and $D_{20}$ as primitive orthogonal sublattices.
With the partner lattice $D_4$, we can now classify all 
genus~$1$ fibrations on $X$ (automatically jacobian by Theorem \ref{thm:jac})
and decide whether they are elliptic
or quasi-elliptic by Theorem \ref{thm:e-qe}.

Note that by Theorem \ref{thm:e-qe} it will be immediately clear from the embedding of $D_4$
into the Niemeier lattice whether the resulting genus~$1$ fibration
has non-torsion sections (and thus is elliptic).
Namely $D_4$ embeds into all root lattices of type $D_n (n\geq
4), E_n (n=6,7,8)$,
but not into any $A_n$.
The orthogonal complement of this embedding is always a root lattice
(and therefore corresponds to fiber components) unless the overlattice
in question is $D_5$ or $E_6$.
In the latter cases, the \MoW\ rank thus has to be positive,
equaling one resp.~two.

\section{Genus one fibrations on $X$}
\label{s:class}

This section gives the primitive embeddings of $L=D_4$ into Niemeier
lattices.
By the previous section, this  describes all  genus~$1$
fibrations on our K3 surface $X$.
The following table lists the root type $R(N)$ that characterizes
the corresponding Niemeier lattice $N$ uniquely.
The next entry is the root type $R(M)$ of the orthogonal complement
of the primitive embedding of $L=D_4$ into $N$.
Since this will serve as essential lattice $M$ of an elliptic fibration,
it encodes the reducible singular fibers.
The difference of the ranks of $R(M)$ and $M$ (the latter being
$20$) gives the $\MW$-rank.
As explained above, the $\MW$-rank is positive if and only if $D_4$
is embedded into $D_5$ or $E_6$.
By \cite{ShMW} we obtain the torsion subgroup of $\MW$ from the
primitive closure $R(M)'$ of $R(M)$ inside $\NS$:
\[
\MW(X)_\text{tor} \cong R(M)'/R(M).
\]
Then Proposition \ref{Prop} tells us whether the fibration will
be elliptic or quasi-elliptic, as indicated in the last column.

\begin{table}[ht!]
$$
\begin{array}{cccccc}
\hline
\# &   R(N)  &     R(L)  &  \mbox{rk}(\MW)  &  \mbox{Torsion} &
\text{elliptic?}\\
\hline
1   & D_4 A_5^4  &    A_5^4   &   0  &   3\times 6 & e\\
2  &  D_4^6   &     D_4^5   &   0   &  2^4 & qe\\
3 &  D_5^2 A_7^2  & D_5 A_7^2  &   1  &    8 & e\\
4   &D_6 A_9^2  & A_1^2 A_9^2  &  0  &   10 & e\\
5  &  D_6^4 &    A_1^2 D_6^3  &  0   &  2^3 & qe\\
6  & E_6 D_7 A_{11} &   D_7 A_{11}  &   2    &  4 & e\\
7  & E_6 D_7 A_{11} &  A_3 E_6 A_{11}  & 0   &   6 & e\\
8   &  E_6^4  &      E_6^3   &   2   &   3 & e\\
9   & D_8^3   &   D_4 D_8^2   &  0   &  2\times 2 & qe\\
10  & D_9 A_{15}  &    D_5 A_{15}  &   0    &  4 & e\\
11  & E_7 A_{17}  &  A_1^3 A_{17}  &  0   &   6 & e\\
12 & E_7^2 D_{10} &  A_1^3 E_7 D_{10} & 0  &   2\times 2 & qe\\
13 & E_7^2 D_{10}  &   D_6 E_7^2  &  0   &   2 & qe\\
14 &  D_{12}^2    &   D_8 D_{12}   &  0  &    2 & qe\\
15 &  E_8 D_{16}  &    D_4 D_{16}  &   0 &     2 & qe\\
16 &  E_8 D_{16}  &    D_{12} E_8  &   0  &    1 & qe\\
17  &  E_8^3    &   D_4 E_8^2 &   0   &   1 & qe\\
18  &  D_{24}     &    D_{20}    &   0  &    1 & qe\\
\hline
\end{array}
$$
\caption{Genus one fibrations on $X$}
\label{Tab:fibr}
\end{table}

A priori there is one ambiguity in the table:
the root lattice of type $A_1$ can correspond to singular fibers
of type $I_2$ or $III$.
In the present situation, this problem is solved as follows:

If the fibration is quasi-elliptic, then all singular fibers are
additive. Hence the above fibers have type $III$.

If the fibration is elliptic, then in each case involving an $A_1$
there is torsion in $\MW$ of order relatively prime to $2$.
Since fibers of type $III$ do not accommodate $\ell$-torsion sections
outside characteristic $\ell$ ($\ell\neq 2$),
the fibers corresponding to $A_1$'s have type $I_2$.

Table \ref{Tab:fibr} settles the classification statement of Theorem
\ref{thm}.
It remains to prove existence and uniqueness for each genus~$1$ fibration.
This will be achieved in Section \ref{s:eq}, as outlined in the
next section, and Section \ref{s:unique}.

\begin{Remark}
In our concrete situation, we can also distinguish elliptic and quasi-elliptic fibrations,
given a decomposition $\NS(X)=U+M$,
by computing the Euler-Poincar\'e characteristics of the singular fibers
instead of appealing to Theorem \ref{thm:e-qe}.
Since some additive fiber types on an elliptic fibration
necessarily come with wild ramification by \cite{SS2},
this in fact suffices for all cases but \#18
which is implied by \cite{S-max} to be quasi-elliptic.
\end{Remark}

Several of the fibrations from Table \ref{Tab:fibr} have been studied
by Dolgachev and Kond\=o in \cite{DK}, by Ito in \cite{Ito2}, 
 and by one of us in \cite{S-MJM},
see also
 \cite{KK}, \cite[App.~2]{OS}, \cite{RS2}, \cite{Schroeer}, \cite[Ex.~4.1]{Sh77} as indicated in the following sections.
Here we complement the previous considerations to derive equations
and connections for all fibrations.
We conclude this section with a remark about Picard numbers over
finite fields.
For each fibration, we will exhibit a model over $\F_2$ with Picard
number $22$ over $\F_4$.
However, the question of the Picard number over $\F_2$ is more
subtle.
We will see in the next section that the first two fibrations admit
models $X$ with $\rho(X/\F_2)=15$.
This cannot be improved because of the Galois action on the singular
fibers and their components (or on the \MoW\ group).
In contrast, for all other fibrations we will exhibit models with
$\rho(X/\F_2)=21$.
This is optimal for supersingular K3 surfaces by \cite[(6.8)]{Artin} 
(see also \cite[Thm.~4.4]{S-MJM}, \cite{Sss}).
More precisely, 
we will show that all models with $\rho(X/\F_2)$ fixed 
(i.e.~$15$ or $21$) are isomorphic over $\F_2$.
In order to move between these two groups,
we will exhibit two different models of $\# 5$ which are isomorphic over $\F_4=\F_2(\varrho)$
with $\varrho^2+\varrho+1=0$.

\section{Plan for connections}

Let $S$ be a projective K3 surface.
Recall that it suffices to identify a divisor $D$ on $S$
that has the shape of a singular fiber from Kodaira's list
in order to find an genus~$1$ fibration on~$S$\/ with $D$\/ as singular fiber.
The fibration is induced by the linear system $|D|$.
Moreover, any irreducible curve $C$ with $C\cdot D=1$ gives a section
of the fibration.

With these tools at hand, it is in principle possible to derive
all fibrations in Table \ref{Tab:fibr} from a single model of the
surface $X$.
In practice, however, it is often easier to pursue this aim in
several steps, since one can usually find only a few linear systems
without too much effort.
The following diagram sketches how we will connect all fibrations.
The numbers refer to the figures in the next section where the
connections are derived (or in one case to a subsection which provides
a further reference).

$$
\begin{array}{ccccccccc}
 \#10 &&
\#3
&&&&&&\\
 {\downarrow} {\scriptstyle\ref{Fig:10-6}} &&
{\downarrow} {\scriptstyle \ref{Fig:3-13}}
&&&&&&\\
 \#6 && \#13 && \#11 &&&&\\
{\downarrow} {\scriptstyle\ref{ss:6}}
   && {\uparrow} {\scriptstyle \ref{Fig:7-13}}
    & {\nearrow} {\scriptstyle \ref{Fig:7-11}}
      &&&&&\\
 \#8 & \stackrel{\ref{Fig:7-8}}{\longleftarrow} & \#7 & \stackrel{\ref{Fig:7-14}}{\longrightarrow}
& \#14 & \stackrel{\ref{Fig:14-16}}{\longrightarrow} & \#16 & \stackrel{\ref{Fig:16-18}}{\longrightarrow}
& \#18\\
&&
{\downarrow} {\scriptstyle \ref{Fig:7-4}}
&&&&&&\\
&&
\#4&
\stackrel{\ref{Fig:4-5}}{\longrightarrow}
&
\#5&
\stackrel{\ref{Fig:5-2}}{\longrightarrow}
&
\#2
&
\stackrel{\ref{Fig:1-2}}{\longleftarrow}
&
\#1\\
&&
{\downarrow} {\scriptstyle \ref{Fig:4-12}}
&&
{\downarrow} {\scriptstyle \ref{Fig:5-9}}
&&&&\\
&&
\#12
&&
\#9
&&&&\\
&&
{\downarrow} {\scriptstyle \ref{Fig:12-15}}
&&&&&&\\

\#17 &
\stackrel{\ref{Fig:17-15}}{\longrightarrow}
& \#15
&&&&&&
\end{array}
$$

\section{Equations \& Connections}
\label{s:eq}

Usually we shall use affine coordinates $x,y,t$ with $t$ as the
parameter of the base curve $\PP^1$ over $\F_2$.
The new parameter will be denoted by $u$, i.e.~it exhibits a new
genus~$1$ fibration on $X$ by the surjection
\begin{eqnarray*}
X & \to & \PP^1\\
(x,y,t) & \mapsto & u(x,y,t)
\end{eqnarray*}

A 5-tuple $[a_1,a_2,a_3,a_4,a_6]$ refers to the usual short-hand
notation for the elliptic curve
\[
y^2 + a_1xy+a_3y = x^3 + a_2x^2+a_4x+a_6.
\]
This fibration is quasi-elliptic in characteristic $2$ if and only
if $a_1\equiv a_3\equiv 0$ identically.

\subsection{\#1: $R(M) = A_5^4$}

This fibration arises as inseparable base change from the Hesse
pencil (see \cite[Ex. 4.1]{Sh77}):
\begin{eqnarray}\label{eq:Hesse}
X:\;\; x^3+y^3+z^3 = t^2xyz.
\end{eqnarray}

A Weierstrass model can be found for instance in \cite{Ito2}.
We have sections at the base points of the cubics (induced from
the Hesse pencil) plus the likes of $[x,y,z]=[t,1,1]$.
In total the sections are always given by $x^3=z^3$ or $y^3=z^3$
or $x^3=y^3$.

\subsubsection*{Connection with \#2}
We can extract $\tilde D_4$ divisors from sections and fiber components.

We shall work affinely in the chart $z=1$.
For instance by setting $u=y$, we visibly arrange for $\td_4$ fibers
at $u=0,\infty$.

In the sequel we will draw figures with fiber components and sections
to visualize the connections.
We will distinguish as follows between old and new fibration:

\begin{center}
\begin{tabular}{rl}
old fiber components & balls\\
old sections & small circles\\
&\\
new fibers & framed by boxes\\
new sections & big circles
\end{tabular}
\end{center}

The center of the following figure sketches the components of the
$I_6$ fiber at $t=\infty$.
We identify the fiber components $C_x, C_y, C_z$ given by $x=0$
resp.~$y=0$ resp.~$z=0$ of the model (\ref{eq:Hesse}).
The other three components arise as the exceptional divisors above
the singular points at their intersections.
The given sections come from the base points of the Hesse pencil
with $y=0, x^3=z^3$ (LHS) or $x^3=y^3, z=0$ (RHS).
The component $C_x$ serves as a section of the new fibration.

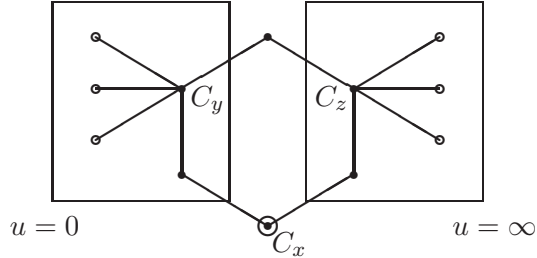
\begin{figure}[ht!]
\setlength{\unitlength}{.45in}
\begin{picture}(10,3.2)(0,0)
\thicklines


\multiput(4,2)(2,0){2}{\circle*{.1}}
\multiput(4,1)(2,0){2}{\circle*{.1}}
\put(4,2){\line(0,-1){1}}
\put(6,2){\line(0,-1){1}}
\put(5,0.4){\circle*{.1}}
\put(5,2.6){\circle*{.1}}

\put(5,0.4){\circle{0.2}}

\put(3,1.4){\line(5,3){2}}
\put(5,0.4){\line(5,3){1}}
\put(4,1){\line(5,-3){1}}
\put(5,2.6){\line(5,-3){2}}

\put(3,1.4){\circle{.1}}
\put(3,2){\circle{.1}}
\put(3,2.6){\circle{.1}}
\put(3,2){\line(1,0){1}}

\put(3,2.6){\line(5,-3){1}}
\put(6,2){\line(5,3){1}}

\put(7,1.4){\circle{.1}}
\put(7,2){\circle{.1}}
\put(7,2.6){\circle{.1}}
\put(6,2){\line(1,0){1}}

\put(2,0.3){$u=0$}
\put(7.15,0.3){$u=\infty$}
\put(5.05,0.1){$C_x$}
\put(4.1,1.8){$C_y$}
\put(5.55,1.8){$C_z$}

%
\thinlines
\put(2.5,0.7){\framebox(2.05,2.3){}}
\put(5.45,0.7){\framebox(2.05,2.3){}}

\end{picture}
\caption{Two $\tilde D_4$ divisors supported on $\tilde A_5$ and
sections}
\label{Fig:1-2}
\end{figure}

This yields the quasi-elliptic fibration
\[
X:\;\; t^2 = ux(x^3+u^3+1).
\]
This  can be transformed into Weierstrass form as follows. First
homogenize the RHS as a quartic polynomial with variable $z$. Setting
$x=1$, we obtain a cubic in Weierstrass form up to some factors:
\[
X:\;\; t^2 = u((u^3+1)z^3+1).
\]
The change of variables $(z,t)\mapsto (z/(u(u^3+1))^2, t/(u(u^3+1))^2)$
then returns the Weierstrass form
\[
X:\;\; t^2 = z^3 + u^3(u^3+1)^2.
\]
One reads off singular fibers of type $\tilde D_4$ at $u=0, \infty$
(as seen above) and at the roots of $u^3+1$.

\subsection{\#2: $R(M)=D_4^5$}
\label{ss:2}

This fibration admits several nice models, for instance $[0,0,0,0,(t^3+1)^3]$
with singular fibers at all points of $\PP^1(\F_4)$ as seen above. 
There are plenty of automorphisms respecting the fibration, for instance
\[
\alpha: (x,y,t)\mapsto (\varrho x,y,t)
\]
for $\varrho^3=1$ and those induced by M\"obius transformations
of $\PP^1$ that permute $\infty$ and third roots of unity such
as
\[
(x,y,t) \mapsto (x/(t+1)^4, y/(t+1)^6, t/(t+1)).
\]
$\MW=(\Z/2\Z)^4$ with sections $P=(t^3+1,0), Q=(t(t^3+1), (t^3+1)^2)$
plus images under above automorphisms.

As an example, we give two connections, but we shall not use them here,
since they do not lead to models with maximal Picard number $\rho(X/\F_2)=21$
although the new fibrations admit such models (cf.~\ref{s:5}).
In the sequel, we shall only give the connections needed for the
proof of Theorem \ref{thm}.

\subsubsection*{Connection with \#3}
$u=y/((t^2+t+1)(x+t^3+1))$ extracts (independently at $u=0$ and
$\infty$) two $\tilde A_7$ divisors from pairs of two $\tilde D_4$
fibers connected through two sections.

%

%



%









\subsubsection*{Connection with \#8}
$u=y/(t^3+1)^2$ extracts $\te_6$ from $\td_4$ at $\infty$ and two-torsion
sections $P, \alpha P, \alpha^2 P$ at $u=0$.
Same at $u=\infty$ from zero-section plus identity and double components
of $\td_4$ fibers at roots of  $t^3+1$.
The remaining simple components of the fibers at the roots of $t^3+1$
serve as sections.

%

%











\subsection{\#3: $R(M) = D_5A_7^2$}

From \#2, we can obtain the model of \#3 as cubic pencil
\[
X:\;\; (x^2+x+1)(y+1) = u^2 (y^2+y+1)(x+1).
\]
This fibration is a purely inseparable base change by $s=u^2$ from
a rational elliptic surface $S$ with configuration $[4,4,III]$.
Here the $III$-fiber comes with wild ramification of index one;
since the ramification index stays constant under the base change,
the special fiber is replaced by type $I_1^*$ as claimed.
The base points of the pencil generate $\MW(S) \cong \Z \times
\Z/4\Z$. 

We find  generators of $\MW(X)$ in terms of another model of this
elliptic fibration
which also has the advantage of maximal Picard number $\rho(X/\F_2)=21$.
It arises from the extremal rational elliptic surface $[1,s^2,s^2,0,0]$
with singular fibers of type $I_8$ at $t=0$ and $III$ at $\infty$
through the base change $t\mapsto s=t^2+t$:
\begin{eqnarray}
\label{eq3}
X:\;\; y^2 + xy + (t^2+t)^2 y = x^3 + (t^2+t)^2 x^2.
\end{eqnarray}
Next to the induced torsion sections $(s^2,0), (0,0), (0,s^2)$,
there is an $8$-torsion section $P=(t^2(t+1), t^4 (t+1))$.
Moreover there is an induced section $Q=(t^2,\varrho t^4)$ of height $1$. 
By computing the discriminant of $\NS(X)$, one verifies that these
sections generate $\MW(X)$.


\subsubsection*{Connection with \#13}

$u=(x+s^2)/s^4$ extracts an $\tilde E_7$ at $u=\infty$
from the $\tilde A_7$ at $s=0$ and the zero section.
The non-identity components of the other $\tilde A_7$ together
with the two-torsion section $Q=(0,0)$ form another $\tilde E_7$
at $u=1$.
This leaves a root lattice $D_5$ ($\tilde D_5$ minus identity component)
at $\infty$ disjoint;
on the new fibration it results in a singular fiber of type  $\tilde
D_6$ at $u=0$.
As a new section, one can take $P$.

\begin{figure}[ht!]
\setlength{\unitlength}{.45in}
\begin{picture}(10.5,5)(-0.25,0.5)
\thicklines

%

\put(5.5,3){\circle*{.1}}
\put(6,4){\circle*{.1}}
\put(7,4.5){\circle*{.1}}
\put(8,4){\circle*{.1}}

\put(8.5,3){\circle*{.1}}

\put(5.5,3){\line(1,2){.5}}
\put(6,4){\line(2,1){1}}

\put(8.5,3){\line(-1,2){.5}}
\put(8,4){\line(-2,1){1}}

\put(6,2){\circle*{.1}}
\put(7,1.5){\circle*{.1}}
\put(8,2){\circle*{.1}}

\put(5.5,3){\line(1,-2){.5}}
\put(6,2){\line(2,-1){1}}

\put(8.5,3){\line(-1,-2){.5}}
\put(8,2){\line(-2,-1){1}}

\put(0.5,3){\circle*{.1}}
\put(1,4){\circle*{.1}}
\put(2,4.5){\circle*{.1}}
\put(3,4){\circle*{.1}}

\put(8.5,3){\line(1,0){1.5}}
\put(9.5,3){\circle{.1}}
\put(9.15,3.15){$Q$}





\put(3.5,3){\circle*{.1}}

\put(.5,3){\line(1,2){.5}}
\put(1,4){\line(2,1){1}}

\put(3.5,3){\line(-1,2){.5}}
\put(3,4){\line(-2,1){1}}

\put(1,2){\circle*{.1}}
\put(2,1.5){\circle*{.1}}
\put(3,2){\circle*{.1}}


\put(.5,3){\line(1,-2){.5}}
\put(1,2){\line(2,-1){1}}

\put(3.5,3){\line(-1,-2){.5}}
\put(3,2){\line(-2,-1){1}}

\put(4.5,3){\circle{.1}}
\put(3.5,3){\line(1,0){2}}
\put(4.15,3.15){$O$}

\put(0,3){\line(1,0){.5}}

\put(5.25,5){\circle{.1}}
\put(5.25,5){\circle{.2}}
\put(5.25,5){\line(3,-4){.75}}
\put(5.4,5.1){$P$}

\qbezier(1,4)(1,6)(5.25,5)








\thinlines

\put(.8,0.6){$u=\infty$}
\put(8.8,.6){$u=1$}

\put(0.75,1){\framebox(4,4){}}
\put(5.75,1){\framebox(4,4){}}

\end{picture}
\caption{Two $\tilde E_7$ divisors supported on two $\tilde A_7$'s
and sections}
\label{Fig:3-13}
\end{figure}
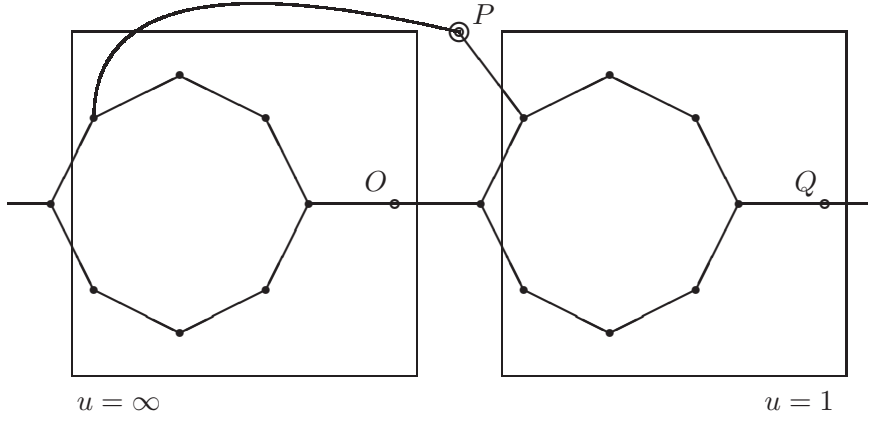

We take this example as an opportunity to explain how one can derive
the Weierstrass form of the new fibration explicitly.
In general, it is often instructive to work with some resolution
of singularities related to the new coordinate $u$.
Here it concerns  the $A_7$  singularity
 of the Weierstrass form \eqref{eq3} at $(x,y,s)=(0,0,0)$.
We proceed in two steps, always choosing an appropriate affine
chart.
Blowing up twice yields affine coordinates
\[
x=s^2x'', y=s^2y''.
\]
The Weierstrass form transforms as
\begin{eqnarray}
\label{eq3-1}
X:\;\; y''^2 + x''y'' + (s+1)^2 y'' = s^2x''^3 + (s^2+s)^2 x''^2.
\end{eqnarray}
Here the section $P$ takes the shape $(x'', y'')=(s+1, s^2(s+1))$.
The node of the above fibration in the fiber $s=0$ sits at $(x'',y'')=(1,0)$.
Hence we translate $x''$ by $1$ and then blow-up two more times.
This brings us exactly to the coordinate $u$ from above (and another
coordinate $v$):
\[
x''=s^2u+1, y''=s^2v.
\]
Here \eqref{eq3-1} transforms as
\begin{eqnarray}
\label{eq3-2}
X:\;\; v^2+uv+v = s^4u^3+s^4u^2+u+1.
\end{eqnarray}
The section $P$ is expressed as $(u,v)=(1/s, s+1)$.
Now we want to consider \eqref{eq3-2} as an elliptic fibration
onto $u\in\PP^1$.
Then $P$ gives us the section $(s,v)=(1/u, 1+1/u)$.
In order to obtain a Weierstrass form, we first translate $s$ and
$v$ by the coordinates of the  section.
This gives
\[
X:\;\; v^2+(u+1)v = u^2(u+1)s^4.
\]
We now modify $v\mapsto sv$, yielding the following plane cubic
\[
X:\;\; sv^2+(u+1)v = u^2(u+1)s^3.
\]
Next we homogenize by the variable $w$ and set $v=1$ to obtain
the following quasi-elliptic fibration:
\[
X:\;\; (u+1)w^2 = u^2(u+1)s^3+s.
\]
Finally the variable change $(s,w)\mapsto(s/(u(u+1))^2, w/(u^2(u+1)^3)$
gives the Weierstrass form
\[
X:\;\; w^2 = s^3+u^2(u+1)^3s.
\]
One immediately checks that this has singular fibers of type $\tilde
D_6$ at $u=0$ and $\tilde E_7$ at $u=1,\infty$ as predicted.
Similar computations apply to all other connections.

\subsection{\#4: $R(M) = A_1^2 A_9^2$}
This fibration arises from (the mod $2$ reduction of) the universal
elliptic curve for $\Gamma_1(5)$ by purely inseparable base change.
A model can be given as $[t^2+1, t^2, t^2, 0, 0]$ with $\tilde
A_9$'s at $0,\infty$ and $\tilde A_1$'s at the roots of $t^2+t+1$.
$\MW=\Z/10\Z$ with  $5$-torsion section induced from the universal
elliptic  curve, generated by $(0,0)$ or $(t^2,0)$ for instance.
As an extra feature there is a $2$-torsion section $(t^2/(t+1)^2,
t^4/(t+1)^3)$ meeting the zero section. (This can only happen for
$p^n$-torsion in characteristic $p$; Shioda calls such torsion
sections peculiar in \cite{OS}).
Sections of order ten are e.g.~$P=(t,t)$  and $(t^2+t^3, t^4)$.


\subsubsection*{Connection with \#5}
$u=x/t^2$ extracts $\tilde D_6$ from $\tilde A_9$'s and zero section.
The remaining fiber components combine with sections $4P, 6P$ (at
$t=0$) resp.~$2P, 8P$ (at $t=\infty$) for two further copies of
$\tilde D_6$.
$A_1$'s stay unchanged.

The eight two-torsion sections of the new fibration come from the
remaining four fiber components of the two $\tilde A_9$ fibers
and the four ten-torsion sections $P, 3P, 7P, 9P$.

\begin{figure}[ht!]
\setlength{\unitlength}{.45in}
\begin{picture}(10,5.5)(-0.25,0.5)
\thicklines

%

\put(6,3){\circle*{.1}}
\put(6.25,4){\circle*{.1}}
\put(7.25,4.5){\circle*{.1}}
\put(7.25,4.5){\circle{.2}}
\put(8.25,4.5){\circle*{.1}}

\put(9.5,3){\circle*{.1}}
\put(9.25,4){\circle*{.1}}

\put(6,3){\line(1,4){.25}}
\put(6.25,4){\line(2,1){1}}
\put(7.25,4.5){\line(1,0){1}}

\put(9.5,3){\line(-1,4){.25}}
\put(9.25,4){\line(-2,1){1}}

\put(6.25,2){\circle*{.1}}
\put(7.25,1.5){\circle*{.1}}
\put(7.25,1.5){\circle{.2}}
\put(8.25,1.5){\circle*{.1}}

\put(9.25,2){\circle*{.1}}

\put(6,3){\line(1,-4){.25}}
\put(6.25,2){\line(2,-1){1}}
\put(7.25,1.5){\line(1,0){1}}

\put(9.5,3){\line(-1,-4){.25}}
\put(9.25,2){\line(-2,-1){1}}
\put(2.75,4.5){\circle{.2}}
\put(2.75,1.5){\circle{.2}}

\put(0.5,3){\circle*{.1}}
\put(0.75,4){\circle*{.1}}
\put(1.75,4.5){\circle*{.1}}
\put(2.75,4.5){\circle*{.1}}

\put(.75,4){\line(0,1){.75}}
\put(.74,4.75){\circle{.1}}
\put(.9,4.75){$6P$}

\qbezier(.75,4.75)(-3.5,-2)(7.25,1.5)

\put(.75,2){\line(0,-1){.75}}
\put(.75,1.25){\circle{.1}}
\put(.9,1.05){$4P$}

\qbezier(.75,1.25)(-3.5,8)(7.25,4.5)

\put(4,3){\circle*{.1}}
\put(3.75,4){\circle*{.1}}

\put(.5,3){\line(1,4){.25}}
\put(.75,4){\line(2,1){1}}
\put(1.75,4.5){\line(1,0){1}}

\put(4,3){\line(-1,4){.25}}
\put(3.75,4){\line(-2,1){1}}

\put(.75,2){\circle*{.1}}
\put(1.75,1.5){\circle*{.1}}
\put(2.75,1.5){\circle*{.1}}

\put(3.75,2){\circle*{.1}}

\put(.5,3){\line(1,-4){.25}}
\put(.75,2){\line(2,-1){1}}
\put(1.75,1.5){\line(1,0){1}}

\put(4,3){\line(-1,-4){.25}}
\put(3.75,2){\line(-2,-1){1}}

\put(5,3){\circle{.1}}
\put(4,3){\line(1,0){2}}
\put(5.1,3.15){$O$}

\put(8.25,1){\circle{.1}}
\put(9.25,2){\line(-1,-1){1}}
\put(8.4,0.7){$2P$}

\qbezier(2.75,1.5)(7,0)(8.25,1)

\put(8.25,5){\circle{.1}}
\put(9.25,4){\line(-1,1){1}}
\put(8.4,5){$8P$}

\qbezier(2.75,4.5)(7,6)(8.25,5)






\thinlines

\put(5,1.2){$u=\infty$}

\put(3.5,1.5){\framebox(3,3){}}
\put(0,.75){\framebox(2.1,4.5){}}
\put(8,.5){\framebox(2.1,5){}}

\end{picture}
\caption{Three $\tilde D_6$ divisors supported on two $\tilde A_9$'s
and sections}
\label{Fig:4-5}
\end{figure}
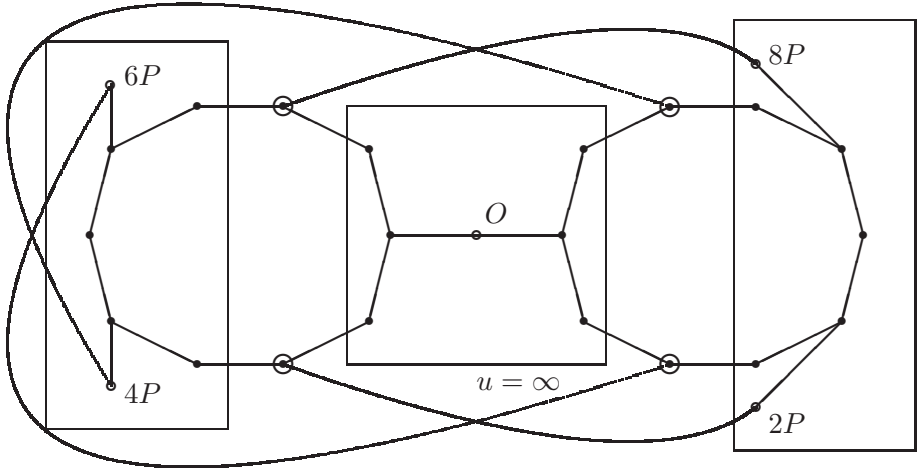

%


\subsubsection*{Connection with \#12}
$u=x$ extracts $\tilde E_7$ from $\tilde A_9$ at $\infty$ and zero
section.
Non-identity components of $\tilde A_9$ at $t=0$ and sections $2P,
8P$ form $\tilde D_{10}$;
the two $A_1$'s formed by the non-identity fiber components at
roots of $t^2+t+1$ remain, and  there is another $A_1$ given by
the opposite component of the $\tilde A_9$ at $\infty$.
The sections of \#12 are thus given by the two fiber components
indicated in the figure, and by the old sections $P, 9P$.

\begin{figure}[ht!]
\setlength{\unitlength}{.45in}
\begin{picture}(10,5)(-0.25,0.5)
\thicklines

%

\put(6,3){\circle*{.1}}
\put(6.25,4){\circle*{.1}}
\put(7.25,4.5){\circle*{.1}}
\put(8.25,4.5){\circle*{.1}}

\put(9.5,3){\circle*{.1}}
\put(9.25,4){\circle*{.1}}

\put(6,3){\line(1,4){.25}}
\put(6.25,4){\line(2,1){1}}
\put(7.25,4.5){\line(1,0){1}}

\put(9.5,3){\line(-1,4){.25}}
\put(9.25,4){\line(-2,1){1}}

\put(6.25,2){\circle*{.1}}
\put(7.25,1.5){\circle*{.1}}
\put(8.25,1.5){\circle*{.1}}

\put(9.25,2){\circle*{.1}}
\put(9.25,2){\circle{.2}}
\put(9.25,4){\circle{.2}}

\put(6,3){\line(1,-4){.25}}
\put(6.25,2){\line(2,-1){1}}
\put(7.25,1.5){\line(1,0){1}}

\put(9.5,3){\line(-1,-4){.25}}
\put(9.25,2){\line(-2,-1){1}}

\put(0.5,3){\circle*{.1}}
\put(0.75,4){\circle*{.1}}
\put(1.75,4.5){\circle*{.1}}
\put(2.75,4.5){\circle*{.1}}

\put(4,3){\circle*{.1}}
\put(3.75,4){\circle*{.1}}

\put(.5,3){\line(1,4){.25}}
\put(.75,4){\line(2,1){1}}
\put(1.75,4.5){\line(1,0){1}}

\put(4,3){\line(-1,4){.25}}
\put(3.75,4){\line(-2,1){1}}

\put(.75,2){\circle*{.1}}
\put(1.75,1.5){\circle*{.1}}
\put(2.75,1.5){\circle*{.1}}

\put(3.75,2){\circle*{.1}}

\put(.5,3){\line(1,-4){.25}}
\put(.75,2){\line(2,-1){1}}
\put(1.75,1.5){\line(1,0){1}}

\put(4,3){\line(-1,-4){.25}}
\put(3.75,2){\line(-2,-1){1}}

\put(5,3){\circle{.1}}
\put(4,3){\line(1,0){2}}
\put(5.1,3.15){$O$}

\put(3.25,1){\circle{.1}}
\put(3.25,1){\line(-1,1){.5}}
\put(3.3,1.15){$2P$}

\qbezier(3.25,1)(9,0)(9.25,2)

\put(3.25,5){\circle{.1}}
\put(3.25,5){\line(-1,-1){.5}}
\put(3.3,4.7){$8P$}

\qbezier(3.25,5)(9,6)(9.25,4)






\thinlines

\put(4.85,1.4){$u=\infty$}
\put(.4,.9){$u=0$}

\put(4.75,1.25){\framebox(3.75,3.5){}}
\put(0.25,.75){\framebox(3.65,4.5){}}

\end{picture}
\caption{$\tilde E_7$ and $\tilde D_{10}$ divisors supported on
two $\tilde A_9$'s and sections}
\label{Fig:4-12}
\end{figure}
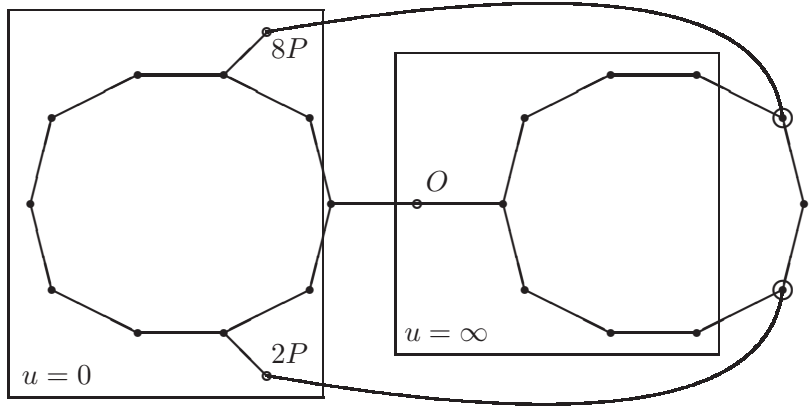

\subsection{\#5: $R(M) = A_1^2 D_6^3$}
\label{s:5}

For this quasi-elliptic fibration, we shall exhibit two models
in order to transfer from the models with $\rho(X/\F_2)=15$ (\#'s
1, 2) to all other fibrations with optimal models of $\rho(X/\F_2)=21$.
We start with the quasi-elliptic fibration $[0,0,0,t(t^3+1)^2,0]$
with $\tilde D_6$'s at roots of $t^3+1$ and $\ta_1$'s at $t=0,\infty$.
This model has $\rho(X/\F_2)=15$:
from $\rho(X/\F_4)=22$,
we first have to subtract $6$ divisors for the two $\tilde D_6$
that are conjugate over $\F_4$.
By Tate's algorithm, the far components of the $\tilde D_6$ at
$t=1$ are also conjugate over $\F_4$.
This accounts for the seventh divisor 
which is not Galois invariant over $\F_2$.

$\MW\cong (\Z/2\Z)^3$ with sections $P=(0,0), ((t+1)(t^3+1),(t^3+1)^2),
Q=((t^2+t+1)t, (t^2+t+1)^2t)$ and their images under the automorphism
$(x,y,t)\mapsto (\varrho x, y, \varrho^2 t)$.

\subsubsection*{Connection with \#2}
$u=x/(t^3+1)$ extracts two $\tilde D_4$'s from identity components
of $\td_6$'s and $\ta_1$ at $\infty$ plus
zero section (at $u=\infty$) or from the section $P$ and the fiber
components outside $t=\infty$ meeting it (at $u=0$).
As new sections, we derive some double fiber components as depicted
in the figure.
Not that one of them is indeed defined over $\F_2$.

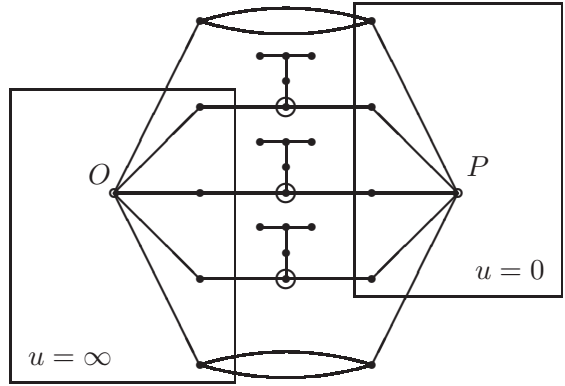
\begin{figure}[ht!]
\setlength{\unitlength}{.45in}
\begin{picture}(10,5.2)(0,-0.5)
\thicklines


\multiput(3,2)(4,0){2}{\circle{.1}}
\multiput(4,2)(1,0){3}{\circle*{.1}}
\multiput(4,1)(1,0){3}{\circle*{.1}}
\multiput(4,3)(1,0){3}{\circle*{.1}}
\multiput(4,4)(2,0){2}{\circle*{.1}}
\multiput(4,0)(2,0){2}{\circle*{.1}}

\multiput(5,3.3)(0,-1){3}{\circle*{.1}}
\multiput(5,3.6)(0,-1){3}{\circle*{.1}}

\multiput(5.3,3.6)(0,-1){3}{\circle*{.1}}
\multiput(4.7,3.6)(0,-1){3}{\circle*{.1}}

\multiput(5,3.6)(0,-1){3}{\line(0,-1){.6}}

\multiput(4.7,3.6)(0,-1){3}{\line(1,0){.6}}

\put(3,2){\line(1,2){1}}
\put(3,2){\line(1,1){1}}
\put(3,2){\line(1,-1){1}}
\put(3,2){\line(1,-2){1}}

\put(7,2){\line(-1,2){1}}
\put(7,2){\line(-1,-2){1}}
\put(7,2){\line(-1,1){1}}
\put(7,2){\line(-1,-1){1}}

\qbezier(4,0)(5,.3)(6,0)
\qbezier(4,0)(5,-0.3)(6,0)

\qbezier(4,4)(5,4.3)(6,4)
\qbezier(4,4)(5,3.7)(6,4)


\put(7.1,2.2){$P$}

\put(2.7,2.1){$O$}

\multiput(5,3)(0,-1){3}{\circle{.2}}


%
\put(3,2){\line(1,0){4}}
\put(4,3){\line(1,0){2}}
\put(4,1){\line(1,0){2}}

\thinlines
\put(5.8,0.8){\framebox(2.4,3.4){}}
\put(1.8,-0.2){\framebox(2.6,3.4){}}

\put(2,0){$u=\infty$}
\put(7.2,1){$u=0$}


\end{picture}
\caption{Two $\tilde D_4$ fibers supported on three $\tilde D_6$'s,
two $\tilde A_1$'s and two sections}
\label{Fig:5-2}
\end{figure}

In order to connect with \#9, we exhibit another model of this
fibration that admits the maximal Picard number $\rho(X/\F_2)=21$.
The coordinate change
\begin{eqnarray}
\label{eq:5}
\;\;\;\;\;\;
(x,y,t) \mapsto (\varrho^2 x/(t+1+\varrho^2)^2, y/(t+1+\varrho^2)^3,
\varrho (t+1+\varrho)/(t+1+\varrho^2)
\end{eqnarray}
yields the quasi-elliptic fibration $[0,0,0,t^2(t+1)^2(t^2+t+1),0]$.
One easily verifies that the $\tilde D_6$ fibers have all components
defined over $\F_2$,
so $\rho(X/\F_2)=21$.

\subsubsection*{Connection with \#9}
$u=x/((t^2+t+1)t)$ extracts $\tilde D_4$ from zero section and
identity components of $\tilde A_1$'s and $\tilde D_6$'s at $0$
and $\infty$.
There are two disjoint copies of $\tilde D_8$.
One involves most of the $\tilde D_6$ at $t=1$ as in the figure;
the other connects the two $\tilde D_6$ at $0$ and $\infty$ by
the section $Q$ .
In the new coordinates of \eqref{eq:5}, this section reads $Q=(t(t^2+t+1),t^2(t^2+t+1))$.

As new torsion sections, we identify the two fiber components depicted
in the figure, and the two old sections
$((t+1)(t^2+t+1),(t+1)^2(t^2+t+1))$ and $(t(t+1)(t^2+t+1),t^2(t+1)^2(t^2+t+1))$.

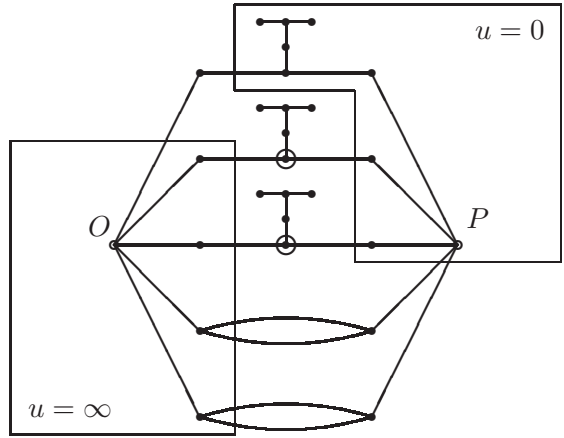
\begin{figure}[ht!]
\setlength{\unitlength}{.45in}
\begin{picture}(10,5.6)(0,-0.5)
\thicklines


\multiput(3,2)(4,0){2}{\circle{.1}}
\multiput(4,2)(1,0){3}{\circle*{.1}}
\multiput(4,4)(1,0){3}{\circle*{.1}}
\multiput(4,3)(1,0){3}{\circle*{.1}}
\multiput(4,1)(2,0){2}{\circle*{.1}}
\multiput(4,0)(2,0){2}{\circle*{.1}}

\multiput(5,4.3)(0,-1){3}{\circle*{.1}}
\multiput(5,4.6)(0,-1){3}{\circle*{.1}}

\multiput(5.3,4.6)(0,-1){3}{\circle*{.1}}
\multiput(4.7,4.6)(0,-1){3}{\circle*{.1}}

\multiput(5,4.6)(0,-1){3}{\line(0,-1){.6}}

\multiput(4.7,4.6)(0,-1){3}{\line(1,0){.6}}

\put(3,2){\line(1,2){1}}
\put(3,2){\line(1,1){1}}
\put(3,2){\line(1,-1){1}}
\put(3,2){\line(1,-2){1}}

\put(7,2){\line(-1,2){1}}
\put(7,2){\line(-1,-2){1}}
\put(7,2){\line(-1,1){1}}
\put(7,2){\line(-1,-1){1}}

\qbezier(4,0)(5,.3)(6,0)
\qbezier(4,0)(5,-0.3)(6,0)

\qbezier(4,1)(5,1.3)(6,1)
\qbezier(4,1)(5,0.7)(6,1)


\put(7.1,2.2){$P$}

\put(2.7,2.1){$O$}


%
\put(3,2){\line(1,0){4}}
\put(4,3){\line(1,0){2}}
\put(4,4){\line(1,0){2}}

\multiput(5,3)(0,-1){2}{\circle{.2}}

\thinlines
\put(1.8,-0.2){\framebox(2.6,3.4){}}

\put(5.8,1.8){\line(0,1){2}}
\put(5.8,1.8){\line(1,0){2.4}}
\put(8.2,1.8){\line(0,1){3}}
\put(5.8,3.8){\line(-1,0){1.4}}
\put(4.4,3.8){\line(0,1){1}}
\put(4.4,4.8){\line(1,0){3.8}}

\put(2,0){$u=\infty$}
\put(7.2,4.4){$u=0$}


\end{picture}
\caption{$\tilde D_4$ and $\tilde D_8$ supported on three $\tilde
D_6$'s, two $\tilde A_1$'s and two sections}
\label{Fig:5-9}
\end{figure}

\subsection{\#6: $R(M) = D_7A_{11}$}
\label{ss:6}

Elliptic fibration given by $[1,t^3,t^3,0,0]$ with $\ta_{11}$ at
$t=0$ and $\td_7$ at $\infty$.
It arises as cubic base change from the rational elliptic surface
with $s=t^3$.\\
$\MW = \Z/4\Z \times A_2[2/3]$.
Torsion generated by $(0,0)$;
minimal sections $(t^3 + \varrho t^2, \varrho^2 t^4)$ for $\varrho^3=1$
and their negatives.

Over $\Q$ arithmetic and geometry of this fibration have been studied in
detail in \cite{S-MJM}.
In particular, the connection to \#8 has been worked out over $\Q$,
and a divisor of type $\td_{20}$ as in \#18 has been identified over $\F_4$,
albeit without expressing its linear system in terms of the above
Weierstrass form.

\subsection{\#7: $R(M) = A_3E_6A_{11}$}

Model for instance $[1,0,t^4,0,0]$.\\
Singular fibers
$\ta_{11}$ at $t=0$, $\ta_3$ at $t=1$ and $\te_6$ at $\infty$.\\
$\MW=\Z/6\Z$, generated by $P=(t^2,t^2)$.
3-torsion: $4P=(0,0)$, 2-torsion: $3P=(t^4,t^6)$.

\subsubsection*{Connection with \#4}

$u =(y-x) / (t(x-t^2))$ 
extracts two divisors of type $\tilde A_9$ from $\tilde A_{11}$
and $\tilde E_6$ connected
by zero section and 6-torsion section $5P=(t^2,t^4)$ on the one
hand and by $P, 4P$ on the other hand.
The odd components of $\tilde A_3$ are not met by any section and
thus form two $A_1$'s.

There are three new sections given by fiber components as shown
in the figure plus $2P, 3P$ and the even components of $\tilde
A_3$.

\begin{figure}[ht!]
\setlength{\unitlength}{.45in}
\begin{picture}(10,5.8)(-0.7,0)
\thicklines

\put(6,5){\circle*{.1}}
\put(7,5){\circle*{.1}}
\put(7,5){\circle{.2}}
\put(8,4.5){\circle*{.1}}
\put(8.5,3.5){\circle*{.1}}
\put(8.5,2.5){\circle*{.1}}
\put(8,1.5){\circle*{.1}}
\put(7,1){\circle*{.1}}
\put(6,1){\circle*{.1}}
\put(5,4.5){\circle*{.1}}
\put(4.5,3.5){\circle*{.1}}
\put(4.5,2.5){\circle*{.1}}
\put(5,1.5){\circle*{.1}}

\put(6,5){\line(1,0){1}}
\put(7,5){\line(2,-1){1}}
\put(8,4.5){\line(1,-2){.5}}
\put(8.5,3.5){\line(0,-1){1}}
\put(8.5,2.5){\line(-1,-2){.5}}
\put(8,1.5){\line(-2,-1){1}}
\put(7,1){\line(-1,0){1}}
\put(6,1){\line(-2,1){1}}
\put(5,4.5){\line(2,1){1}}
\put(4.5,2.5){\line(0,1){1}}
\put(4.5,2.5){\circle{.2}}

\put(4.5,3.5){\line(1,2){.5}}
\put(5,1.5){\line(-1,2){.5}}

\put(3.5,3.5){\circle{.1}}
\put(3.55,3.65){$O$}

\multiput(0.5,3.5)(1,0){3}{\circle*{.1}}

\multiput(-1,5)(0.75,-0.75){2}{\circle*{.1}}
\multiput(-1,2)(0.75,0.75){2}{\circle*{.1}}






\put(-1,5){\line(1,-1){1.5}}
\put(-1,2){\line(1,1){1.5}}
\put(.5,3.5){\line(1,0){4}}
\put(-0.25,2.75){\circle{.2}}


\put(2.5,5){\circle{.1}}
\put(-1,5){\line(1,0){7}}
\put(2.55,4.7){$5P$}

\put(2,2){\circle{.1}}
\put(2,2){\line(6,-1){3}}
\put(-1,2){\line(1,0){3}}
\put(2.05,1.6){$P$}

\put(-1,2){\line(4,-1){4}}
\put(3,1){\circle{.1}}
\qbezier(3,1)(13,-1)(8,4.5)
\put(3.1,1.1){$4P$}






%

%


\thinlines

\put(-1.5,2.25){\line(0,-1){2}}
\put(-1.5,2.25){\line(1,0){9}}
\put(-1.5,0.25){\line(1,0){11.5}}
\put(10,0.25){\line(0,1){5.25}}
\put(7.5,2.25){\line(0,1){3.25}}
\put(7.5,5.5){\line(1,0){2.5}}



\put(-1.5,3){\framebox(8,2.5){}}

\end{picture}
\caption{Two $\tilde A_9$ divisors supported on $\tilde E_6, \tilde
A_{11}$ and torsion sections}
\label{Fig:7-4}
\end{figure}
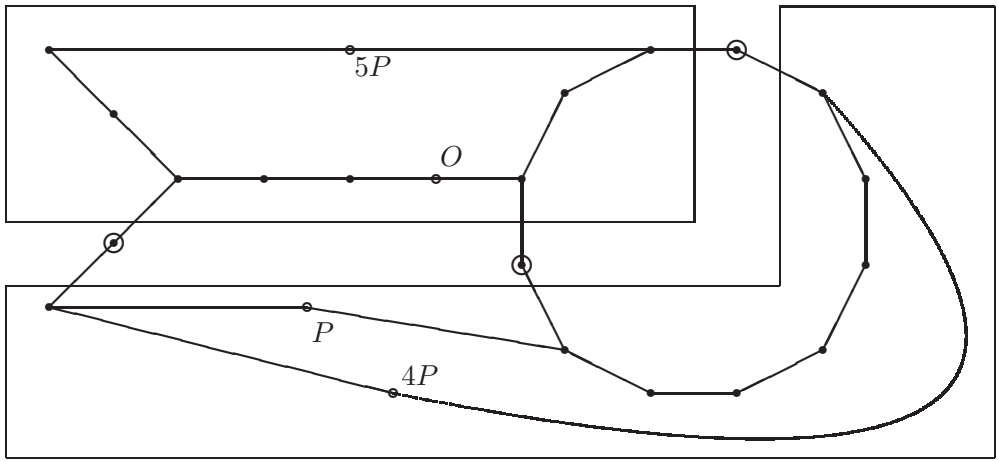

\subsubsection*{Connection with \#8}

$u = (x-t^3)/(t^4-t^3)$ extracts two $\tilde E_6$'s from $\ta_3$
and $\ta_{11}$ connected through $O$ and $3P$. The third copy of
$\tilde E_6$ comes from the root lattice $E_6$ of non-identity
components of the original $\te_6$ fiber.

\begin{figure}[ht!]
\setlength{\unitlength}{.45in}
\begin{picture}(10,4.8)(0,.7)
\thicklines

\put(6,5){\circle*{.1}}
\put(7,5){\circle*{.1}}
\put(7,5){\circle{.2}}

\put(8,4.5){\circle*{.1}}
\put(8.5,3.5){\circle*{.1}}
\put(8.5,2.5){\circle*{.1}}
\put(8,1.5){\circle*{.1}}
\put(7,1){\circle*{.1}}
\put(6,1){\circle*{.1}}
\put(5,4.5){\circle*{.1}}
\put(4.5,3.5){\circle*{.1}}
\put(4.5,2.5){\circle*{.1}}
\put(5,1.5){\circle*{.1}}

\put(6,5){\line(1,0){1}}
\put(7,5){\line(2,-1){1}}
\put(8,4.5){\line(1,-2){.5}}
\put(8.5,3.5){\line(0,-1){1}}
\put(8.5,2.5){\line(-1,-2){.5}}
\put(8,1.5){\line(-2,-1){1}}
\put(7,1){\line(-1,0){1}}
\put(6,1){\line(-2,1){1}}
\put(6,1){\circle{.2}}

\put(5,4.5){\line(2,1){1}}
\put(4.5,2.5){\line(0,1){1}}
\put(4.5,3.5){\line(1,2){.5}}
\put(5,1.5){\line(-1,2){.5}}

\put(3.5,3.5){\circle{.1}}
\put(3.55,3.65){$O$}
\put(2.5,3.5){\line(1,0){2}}

\multiput(1.5,4.5)(1,-1){2}{\line(-1,-1){1}}
\multiput(1.5,4.5)(1,-1){2}{\circle*{.1}}

\multiput(1.5,4.5)(-1,-1){2}{\line(1,-1){1}}
\multiput(0.5,3.5)(1,-1){2}{\circle*{.1}}
\multiput(1.5,2.5)(0,2){2}{\circle{.2}}

\put(0.5,3.5){\line(-3,-1){1}}
\put(8.5,2.5){\line(3,1){2}}
\put(10,3){\circle*{.1}}
\put(9.6,3.1){$3P$}
















%

%


\thinlines


\put(2.1,1.4){$u=\infty$}
\put(9.6,0.9){$u=0$}

\put(2,1.25){\framebox(4.25,4){}}

\put(6.75,0.75){\line(1,0){3.75}}
\put(6.75,0.75){\line(0,1){4}}
\put(6.75,4.75){\line(1,0){3.75}}

\put(1,0.75){\line(-1,0){1.5}}
\put(1,0.75){\line(0,1){4}}
\put(1,4.75){\line(-1,0){1.5}}

\end{picture}
\caption{Two $\tilde E_6$ divisors supported on $\tilde A_3, \tilde
A_{11}$ and 2-torsion sections}
\label{Fig:7-8}
\end{figure}
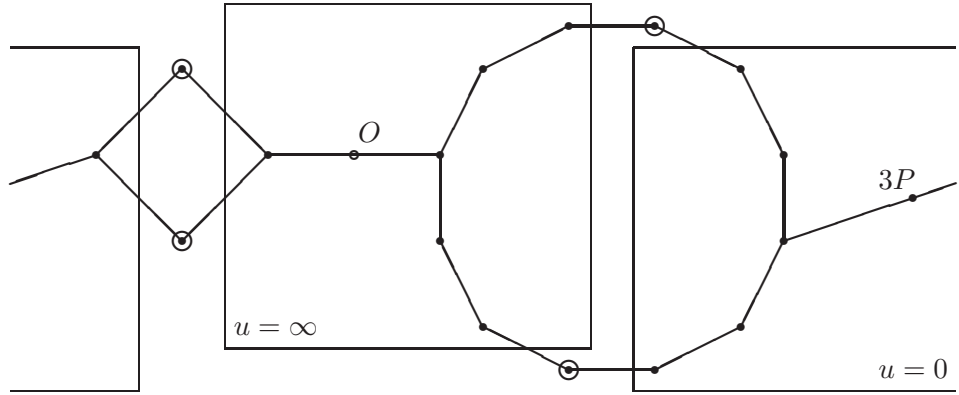


%


\subsubsection*{Connection with \#11}
$u = (y-t^2) / (t(x-t^2))$
extracts $\tilde A_{17}$ from $\te_6, \ta_{11}$ connected through
zero section and $P$.
Contrary to the connection with \#4, we choose the long way around
the $\ta_{11}$ fiber.
This leaves three $A_1$'s comprising a far component of $\tilde
E_6$ as shown in the figure and the odd components of $\tilde A_3$.
On top of the indicated fiber component, we obtain new sections
from the even components of $\tilde A_3$ and $2P, 5P$.

\begin{figure}[ht!]
\setlength{\unitlength}{.45in}
\begin{picture}(10,5.8)(-1.2,0)
\thicklines

\put(6,5){\circle*{.1}}
\put(7,5){\circle*{.1}}
\put(8,4.5){\circle*{.1}}
\put(8.5,3.5){\circle*{.1}}
\put(8.5,2.5){\circle*{.1}}
\put(8,1.5){\circle*{.1}}
\put(7,1){\circle*{.1}}
\put(6,1){\circle*{.1}}
\put(5,4.5){\circle*{.1}}
\put(4.5,3.5){\circle*{.1}}
\put(4.5,2.5){\circle*{.1}}
\put(5,1.5){\circle*{.1}}

\put(6,5){\line(1,0){1}}
\put(7,5){\line(2,-1){1}}
\put(8,4.5){\line(1,-2){.5}}
\put(8.5,3.5){\line(0,-1){1}}
\put(8.5,2.5){\line(-1,-2){.5}}
\put(8,1.5){\line(-2,-1){1}}
\put(7,1){\line(-1,0){1}}
\put(6,1){\line(-2,1){1}}
\put(5,4.5){\line(2,1){1}}
\put(4.5,2.5){\line(0,1){1}}
\put(4.5,3.5){\line(1,2){.5}}
\put(5,1.5){\line(-1,2){.5}}

\put(3.5,3.5){\circle{.1}}
\put(3.55,3.65){$O$}

\multiput(0.5,3.5)(1,0){3}{\circle*{.1}}

\multiput(-1,5)(0.75,-0.75){2}{\circle*{.1}}
\multiput(-1,2)(0.75,0.75){2}{\circle*{.1}}

\put(-1,5){\line(1,-1){1.5}}
\put(-1,2){\line(1,1){1.5}}
\put(.5,3.5){\line(1,0){4}}

\put(-.25,2.75){\circle{.2}}


\put(2.5,5){\circle{.1}}
\put(-1,5){\line(1,0){7}}
\put(2.55,4.7){$P$}



\thinlines

\put(-1.5,3){\line(0,1){2.5}}
\put(-1.5,3){\line(1,0){5}}
\put(-1.5,5.5){\line(1,0){10.5}}
\put(9,5.5){\line(0,-1){5}}
\put(3.5,3){\line(0,-1){2.5}}
\put(3.5,0.5){\line(1,0){5.5}}



\put(4.75,4.25){\framebox(0.5,0.5){}}

\end{picture}
\caption{$\tilde A_{17}$ divisor supported on $\tilde E_6, \tilde
A_{11}$ and torsion sections}
\label{Fig:7-11}
\end{figure}
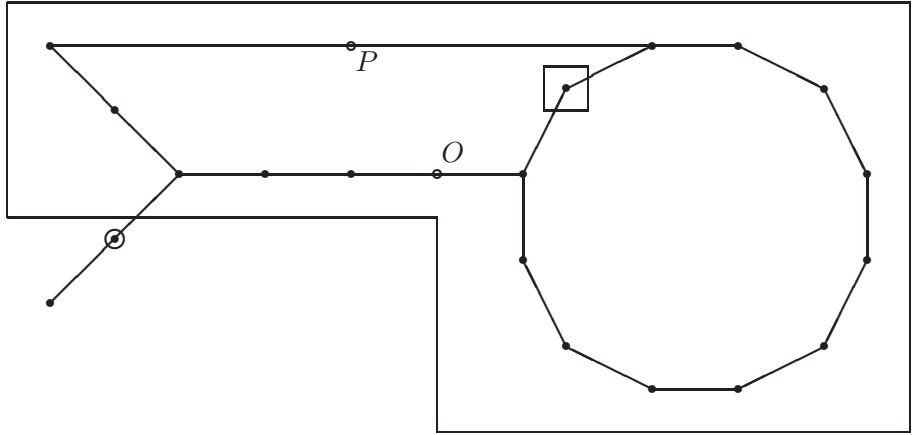




\subsubsection*{Connection with \#13}

$u = x/t^4$ extracts two $\tilde E_7$'s first from $\tilde A_{11}$
adjoined by the zero section
and secondly from $\tilde E_6$ adjoined by $2P, 4P$.
Remaining components of $\tilde A_{11}$ combine with $3P$ and $A_3$
($\ta_3$ minus identity component) to $\tilde D_6$.
Two sections given by fiber components as depicted.

\begin{figure}[ht!]
\setlength{\unitlength}{.45in}
\begin{picture}(11,7)(-0.75,0.25)
\thicklines


\put(4,3){\circle*{.1}}
\put(4.25,4){\circle*{.1}}
\put(5,4.75){\circle*{.1}}
\put(6,5){\circle*{.1}}

\put(8,3){\circle*{.1}}
\put(7.75,4){\circle*{.1}}
\put(7,4.75){\circle*{.1}}

\put(7,4.75){\circle{.2}}
\put(7,1.25){\circle{.2}}
\put(4,3){\line(1,4){0.25}}
\put(4.25,4){\line(1,1){.75}}
\put(5,4.75){\line(4,1){1}}

\put(8,3){\line(-1,4){0.25}}
\put(7.75,4){\line(-1,1){.75}}
\put(7,4.75){\line(-4,1){1}}

\put(4.25,2){\circle*{.1}}
\put(5,1.25){\circle*{.1}}
\put(6,1){\circle*{.1}}

\put(7.75,2){\circle*{.1}}
\put(7,1.25){\circle*{.1}}

\put(4,3){\line(1,-4){0.25}}
\put(4.25,2){\line(1,-1){.75}}
\put(5,1.25){\line(4,-1){1}}

\put(8,3){\line(-1,-4){0.25}}
\put(7.75,2){\line(-1,-1){.75}}
\put(7,1.25){\line(-4,-1){1}}

\put(3,3){\circle{.1}}
\put(3.05,3.15){$O$}

\multiput(0,3)(1,0){3}{\circle*{.1}}

\multiput(-1,4)(0,-2){2}{\circle*{.1}}
\multiput(-.5,3.5)(0,-1){2}{\circle*{.1}}

\put(-1,4){\line(1,-1){1}}
\put(-1,2){\line(1,1){1}}
\put(0,3){\line(1,0){4}}

\put(9,3){\circle{.1}}

\put(9.1,3.15){$3P$}
\qbezier(2,3)(3,11)(9,3)

\put(10,3){\circle*{.1}}
\put(10.5,3.5){\circle*{.1}}
\put(10.5,2.5){\circle*{.1}}

\put(8,3){\line(1,0){2}}
\put(10,3){\line(1,1){.5}}
\put(10,3){\line(1,-1){.5}}

\put(1,1){\circle{.1}}
\put(1,1){\line(-2,1){2}}
\put(1,1.2){$2P$}
\qbezier(1,1)(6,-.5)(7,1.25)

\put(1,5){\circle{.1}}
\put(1,5){\line(-2,-1){2}}
\put(1,4.6){$4P$}
\qbezier(1,5)(6,6.5)(7,4.75)







\thinlines



\put(2.9,.9){$u=\infty$}
\put(-1.1,0.9){$u=0$}

\put(-1.25,0.75){\framebox(2.75,4.5){}}

\put(2.75,.75){\framebox(3.5,4.5){}}

\put(7.5,1.5){\framebox(3.25,3){}}

\end{picture}
\caption{Two $\tilde E_7$ and $\tilde D_6$ supported on $\tilde
E_6, \tilde A_{11}, A_3$ and sections}
\label{Fig:7-13}
\end{figure}
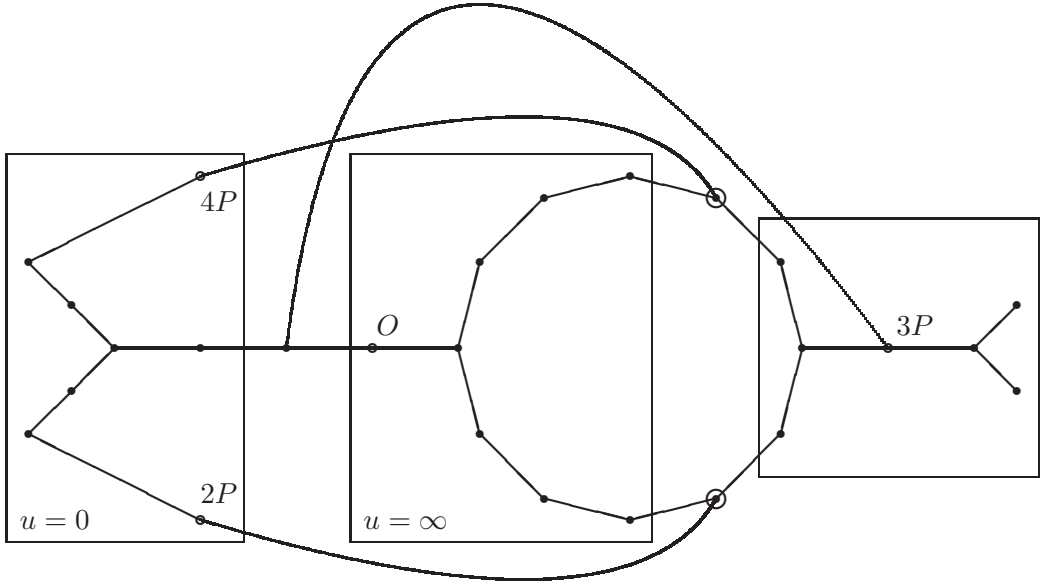




\subsubsection*{Connection with \#14}

$u = (x-1)/(t-1)^2$ extracts $\tilde D_8$ from $\tilde E_6$ and
$\tilde A_3$ connected through zero section.
$\tilde D_{12}$ given by $A_{11}$ extended by sections $P, 5P$.
Far components of $\tilde E_6$ serve as new sections.

\begin{figure}[ht!]
\setlength{\unitlength}{.45in}
\begin{picture}(11,6.5)(-0.75,0)
\thicklines


\put(4,3){\circle*{.1}}
\put(4.25,4){\circle*{.1}}
\put(5,4.75){\circle*{.1}}
\put(6,5){\circle*{.1}}

\put(8,3){\circle*{.1}}
\put(7.75,4){\circle*{.1}}
\put(7,4.75){\circle*{.1}}

%
\put(4,3){\line(1,4){0.25}}
\put(4.25,4){\line(1,1){.75}}
\put(5,4.75){\line(4,1){1}}

\put(8,3){\line(-1,4){0.25}}
\put(7.75,4){\line(-1,1){.75}}
\put(7,4.75){\line(-4,1){1}}

\put(4.25,2){\circle*{.1}}
\put(5,1.25){\circle*{.1}}
\put(6,1){\circle*{.1}}

\put(7.75,2){\circle*{.1}}
\put(7,1.25){\circle*{.1}}

\put(4,3){\line(1,-4){0.25}}
\put(4.25,2){\line(1,-1){.75}}
\put(5,1.25){\line(4,-1){1}}

\put(8,3){\line(-1,-4){0.25}}
\put(7.75,2){\line(-1,-1){.75}}
\put(7,1.25){\line(-4,-1){1}}

\put(3,3){\circle{.1}}
\put(2.95,3.2){$O$}

\multiput(1.5,4.5)(.5,-.5){3}{\circle*{.1}}

\multiput(0,4.5)(.75,0){2}{\circle*{.1}}
\put(0,4.5){\line(1,0){1.5}}
\put(0,4.5){\circle{.2}}

\multiput(1.5,6)(0,-.75){2}{\circle*{.1}}
\put(1.5,6){\line(0,-1){1.5}}
\put(1.5,6){\circle{.2}}


\put(1.5,4.5){\line(1,-1){1.5}}
\put(3,3){\line(1,0){1}}

\put(2.25,2.25){\circle*{.1}}
\put(1.5,2.25){\circle*{.1}}
\put(2.25,1.5){\circle*{.1}}

\put(2.25,2.25){\line(1,1){.75}}
\put(2.25,2.25){\line(0,-1){.75}}
\put(2.25,2.25){\line(-1,0){.75}}

\put(4.25,5.5){\circle{.1}}
\put(4.25,5.5){\line(1,-1){.75}}
\put(4.4,5.5){$P$}

\qbezier(4.25,5.5)(3,6)(1.5,6)

\put(4.25,0.5){\circle{.1}}
\put(4.25,0.5){\line(1,1){.75}}
\put(4.4,0.3){$5P$}

\qbezier(4.25,0.5)(-1,-1)(0,4.5)

%












\thinlines

\put(.6,1.4){$u=\infty$}
\put(7.3,0.15){$u=0$}

\put(.5,1.25){\framebox(2.75,4.25){}}


\put(4.125,0){\framebox(4.125,6){}}

\end{picture}
\caption{$\tilde D_8$ and $\tilde D_{12}$ supported on $\tilde
A_3, \tilde E_6, \tilde A_{11}$ and sections}
\label{Fig:7-14}
\end{figure}
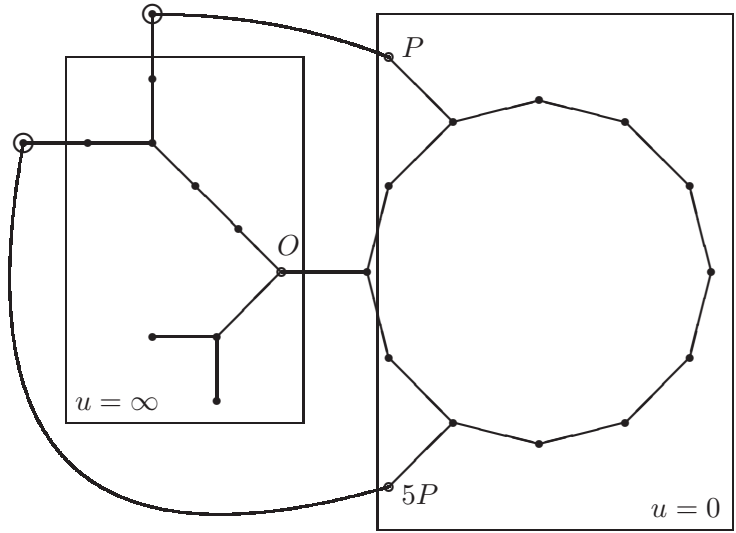

\subsection{\#8: $R(M) = E_6^3$}

Model for instance $[0, 0, t^2 (t+1)^2, 0, 0]$, as investigated
in \cite{S-MJM}.
Singular fibers at $t=0,1,\infty$.
$\MW = A_2[2/3] \times \Z/3\Z$.
Torsion generated by $(0,0)$.
Minimal sections $(\varrho t^2, t^2)$ and their negatives.



\subsection{\#9: $R(M) = D_4D_8^2$}

$[0,0,0,t^2(t^4+t^2+1),t^5(t^2+1)]$.\\
Singular fibers $\td_8$ at $t=0,\infty$,  $\td_4$ at $t=1$.\\
$\MW=(\Z/2\Z)^2$    with sections	$(t,0), (t^3,0), (t^3+t,0)$

\subsection{\#10:  $R(M) = D_5 A_{15}$}

$[t^2, 0, 0, 1, 0]$\\
Singular fibers $\td_5$ at $t=0$, $\ta_{15}$ at $\infty$.\\
$\MW=\Z/4\Z$,  generated by $P=(1,0)$ with 2-torsion at $(0,0)$.


\subsubsection*{Connection with \#6}

$u = (x+t+1) / t^2$ extracts $\tilde D_7$ from $\tilde D_5, \tilde
A_{15}$ connected through zero section.
The disjoint components of $\tilde A_{15}$ form an $A_{11}$.
New sections as depicted plus $P, 3P$.

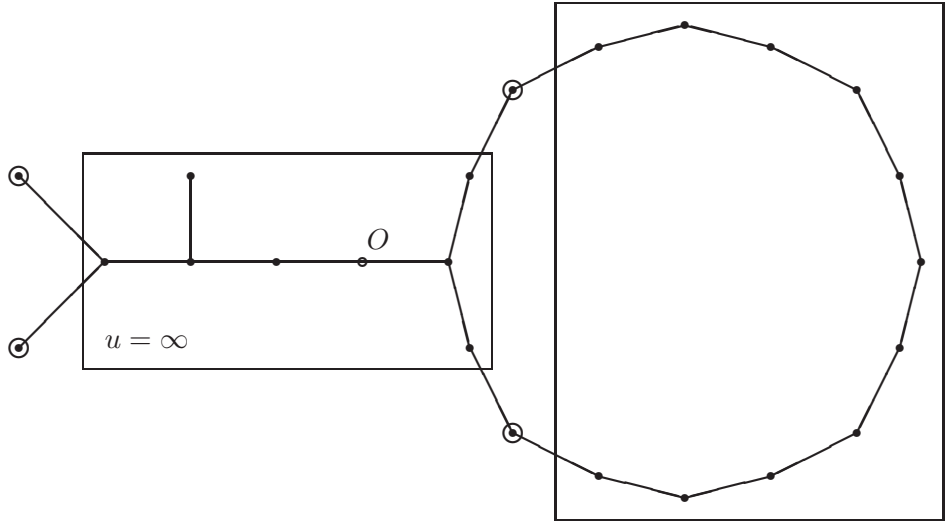
\begin{figure}[ht!]
\setlength{\unitlength}{.45in}
\begin{picture}(10,6)(-0.75,0)
\thicklines

\put(4,3){\circle*{.1}}
\put(4.25,4){\circle*{.1}}
\put(4.75,5){\circle*{.1}}
\put(5.75,5.5){\circle*{.1}}
\put(6.75,5.75){\circle*{.1}}

\put(9.5,3){\circle*{.1}}
\put(9.25,4){\circle*{.1}}
\put(8.75,5){\circle*{.1}}
\put(7.75,5.5){\circle*{.1}}

\put(4,3){\line(1,4){0.25}}
\put(4.25,4){\line(1,2){.5}}
\put(4.75,5){\line(2,1){1}}
\put(5.75,5.5){\line(4,1){1}}

\put(9.5,3){\line(-1,4){0.25}}
\put(9.25,4){\line(-1,2){.5}}
\put(8.75,5){\line(-2,1){1}}
\put(7.75,5.5){\line(-4,1){1}}

\put(4.25,2){\circle*{.1}}
\put(4.75,1){\circle*{.1}}
\put(5.75,0.5){\circle*{.1}}
\put(6.75,0.25){\circle*{.1}}

\put(9.25,2){\circle*{.1}}
\put(8.75,1){\circle*{.1}}
\put(7.75,0.5){\circle*{.1}}

\put(4,3){\line(1,-4){0.25}}
\put(4.25,2){\line(1,-2){.5}}
\put(4.75,1){\line(2,-1){1}}
\put(5.75,0.5){\line(4,-1){1}}

\put(9.5,3){\line(-1,-4){0.25}}
\put(9.25,2){\line(-1,-2){.5}}
\put(8.75,1){\line(-2,-1){1}}
\put(7.75,0.5){\line(-4,-1){1}}


\put(3,3){\circle{.1}}
\put(3.05,3.15){$O$}

\multiput(0,3)(1,0){3}{\circle*{.1}}

\multiput(-1,4)(0,-2){2}{\circle*{.1}}
\put(1,4){\circle*{.1}}

\put(-1,4){\line(1,-1){1}}
\put(-1,2){\line(1,1){1}}
\put(0,3){\line(1,0){4}}
\put(1,4){\line(0,-1){1}}


\multiput(4.75,5)(0,-4){2}{\circle{.2}}
\multiput(-1,4)(0,-2){2}{\circle{.2}}




\thinlines



\put(0,2){$u=\infty$}

\put(-0.25,1.75){\framebox(4.75,2.5){}}

\put(5.25,0){\framebox(4.5,6){}}

\end{picture}
\caption{$\tilde D_7$ and $A_{11}$ supported on $\tilde D_5, \tilde
A_{15}$ and zero section}
\label{Fig:10-6}
\end{figure}


\subsection{\#11:  $R(M) = A_1^3 A_{17}$}

$[t^2, 0, 1, 0, 0]$\\
$\ta_1$'s at third roots of unity, $\ta_{17}$ at $\infty$.\\
$\MW=\Z/6\Z$, generated by $(t,1)$.
This fibration appears in \cite[App.~2]{OS} for the peculiar fact
that it admits the 2-torsion section $(1/t^2,1/t^3)$
which is not disjoint from the zero section
(this is impossible if order and characteristic are coprime).




%



\subsection{\#12: $R(M) = A_1^3E_7D_{10}$}


quasi-elliptic $[0,0,0,t^2(t^3+1),0]$.\\
Reducible fibers: $\td_{10}$ at $t=0$, $\te_7$ at $\infty$ and
$\ta_1$'s at third roots of unity.\\
$\MW=(\Z/2\Z)^2$ with sections $P=(0,0), Q=(t,t^3), (t^4+t,t^6+t^3).$







%


\subsubsection*{Connection with \#15}
$u = x/t^2$ extracts $\tilde D_{16}$ from $\tilde E_7$ and $D_{10}$
connected through zero section.
Far component of $\tilde E_7$ combines with section $P$ and non-identity
components of $\tilde A_1$'s to form $\tilde D_4$.

\begin{figure}[ht!]
\setlength{\unitlength}{.45in}
\begin{picture}(12,4.2)(0,0.5)
\thicklines

\multiput(2,3)(1,0){9}{\circle*{.1}}
\put(2,3){\line(1,0){8}}

\multiput(2,1)(1,0){7}{\circle*{.1}}
\put(2,1){\line(1,0){6}}

\put(3,1){\circle{.2}}

\put(3,4){\circle*{.1}}
\put(3,3){\line(0,1){1}}
\put(9,4){\circle*{.1}}
\put(9,3){\line(0,1){1}}

\put(5,2){\circle*{0.1}}
\put(5,1){\line(0,1){1}}


%


\put(9,2){\circle{.1}}
\put(10,3){\line(-1,-1){2}}
\put(8.6,2){$O$}



\thinlines

\put(1.5,3){\line(0,1){1.5}}
\put(1.5,3){\line(1,-1){2.5}}
\put(1.5,4.5){\line(1,0){6}}
\put(10.5,3){\line(0,-1){2.5}}
\put(10.5,0.5){\line(-1,0){6.5}}
\put(10.5,3){\line(-2,1){3}}





\end{picture}
\caption{$\tilde D_{16}$ divisor supported on $\tilde E_7, \tilde
D_{10}$ and zero section}
\label{Fig:12-15}
\end{figure}
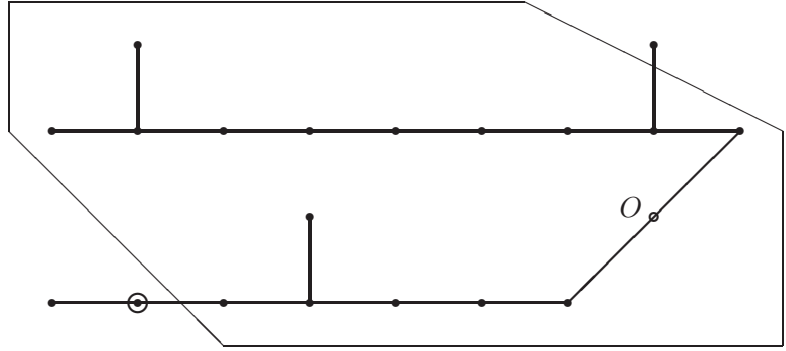

%




\subsection{\#13:  $R(M) = D_6 E_7^2$}

Quasielliptic $[0, 0, 0, t^5+t^3, 0]$\\
Reducible singular fibers $D_6, E_7, E_7$ at $t=1,0,\infty$. \\
$\MW=\Z/2\Z$ generated by $P=(0,0)$.


\subsection{\#14:  $R(M) = D_8 D_{12}$}

Quasielliptic $[0, t, 0, t^6, 0]$.\\
Reducible fibers $D_{12}$ at $t=0$ and $D_8$ at $t=\infty$. \\
$\MW=\Z/2\Z$ generated by $P=(0,0)$.



\subsubsection*{Connection with \#16}

$u=x/t^4$ extracts $\tilde E_8$ from $\tilde D_{12}$ adjoined the
zero section.
$D_8$ then combines with $P$ and remaining components of $\tilde
D_{12}$ to form a new copy of $\tilde D_{12}$.

\begin{figure}[ht!]
\setlength{\unitlength}{.45in}
\begin{picture}(12,4.2)(-0.85,0.5)
\thicklines

\multiput(0,3)(1,0){11}{\circle*{.1}}
\put(0,3){\line(1,0){10}}

\multiput(0,1)(1,0){7}{\circle*{.1}}
\put(0,1){\line(1,0){6}}

\put(7,3){\circle{.2}}

\put(1,4){\circle*{.1}}
\put(1,3){\line(0,1){1}}
\put(9,4){\circle*{.1}}
\put(9,3){\line(0,1){1}}

\put(6,1.5){\circle*{0.1}}
\put(5,1){\line(2,1){1}}

\put(2,1.5){\circle*{.1}}
\put(1,1){\line(2,1){1}}


%


\put(0,2){\circle{.1}}
\put(0,1){\line(0,1){2}}
\put(0.1,2.1){$O$}

\put(5,2){\circle{.1}}
\put(10,3){\line(-5,-1){5}}
\put(5,2){\line(-6,-1){3}}
\put(5.05,1.6){$P$}


\thinlines

\put(-0.5,4.5){\line(0,-1){2.916}}
\put(-0.5,4.5){\line(1,0){7}}
\put(6.5,4.5){\line(0,-1){1.7}}

\put(6.5,2.75){\line(-6,-1){7}}

\put(7.5,4.5){\line(0,-1){1.75}}
\put(7.5,2.75){\line(-6,-1){7}}
\put(7.5,4.5){\line(1,0){3}}
\put(10.5,4.5){\line(0,-1){4}}
\put(10.5,0.5){\line(-1,0){10}}
\put(.5,.5){\line(0,1){1.083}}


\put(-0.35,4.15){$u=\infty$}
\put(9.5,0.7){$u=0$}


\end{picture}
\caption{$\tilde E_8$ and $\tilde D_{12}$ divisors supported on
$\tilde D_8, \tilde D_{12}$ and sections}
\label{Fig:14-16}
\end{figure}
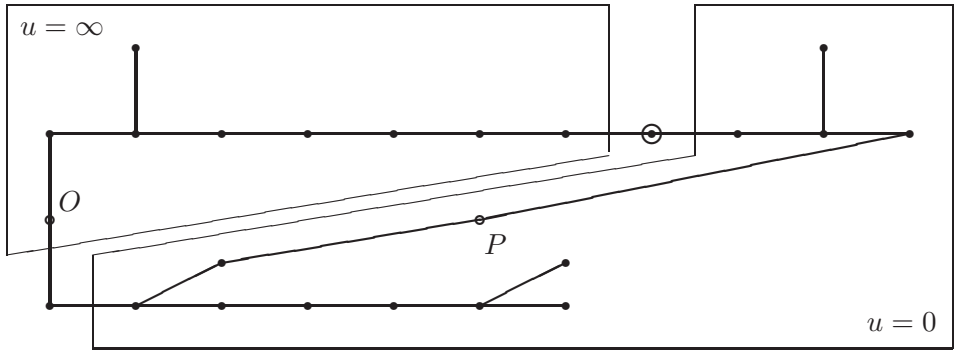


\subsection{\#15: $R(M) = D_4 D_{16}$}

Quasi-elliptic $[0,t^3,0,0,t^3]$.\\
Reducible singular fibers $\td_4$ at $t=0$, $\td_{16}$ at $\infty$.\\
$\MW=\Z/2\Z$ with section $(1,1)$.

%








\subsection{\#16: $R(M) = E_8D_{12}$}
quasi-elliptic $[0,t^3,0,0,t^5]$.\\
Reducible singular fibers $\te_8$ at $t=0, \td_{12}$ at $\infty$.



%


%







\subsubsection*{Connection with \#18}

$u=(x+t^4)/t^3$ extracts $\tilde D_{20}$ from $\tilde E_8$ and
$\tilde D_{12}$ connected by zero section.

\begin{figure}[ht!]
\setlength{\unitlength}{.45in}
\begin{picture}(11,5.4)(-.4,-0.5)
\thicklines




\multiput(0,3)(1,0){11}{\circle*{.1}}
\put(0,3){\line(1,0){10}}

\multiput(1,1)(1,0){8}{\circle*{.1}}
\put(1,1){\line(1,0){7}}

\put(1,1){\circle{.2}}

\put(1,4){\circle*{.1}}
\put(1,3){\line(0,1){1}}
\put(9,4){\circle*{.1}}
\put(9,3){\line(0,1){1}}

\put(3,0){\circle*{0.1}}
\put(3,0){\line(0,1){1}}


%


\put(9,2){\circle{.1}}
\put(10,3){\line(-1,-1){2}}
\put(8.6,2){$O$}



\thinlines

\put(-0.5,3){\line(0,1){1.5}}
\put(-0.5,3){\line(1,-1){3.5}}
\put(-0.5,4.5){\line(1,0){8}}
\put(10.5,3){\line(0,-1){3.5}}
\put(10.5,-0.5){\line(-1,0){7.5}}
\put(10.5,3){\line(-2,1){3}}





\end{picture}
\caption{$\tilde D_{20}$ divisor supported on $\tilde E_8, \tilde
D_{12}$ and zero section}
\label{Fig:16-18}
\end{figure}
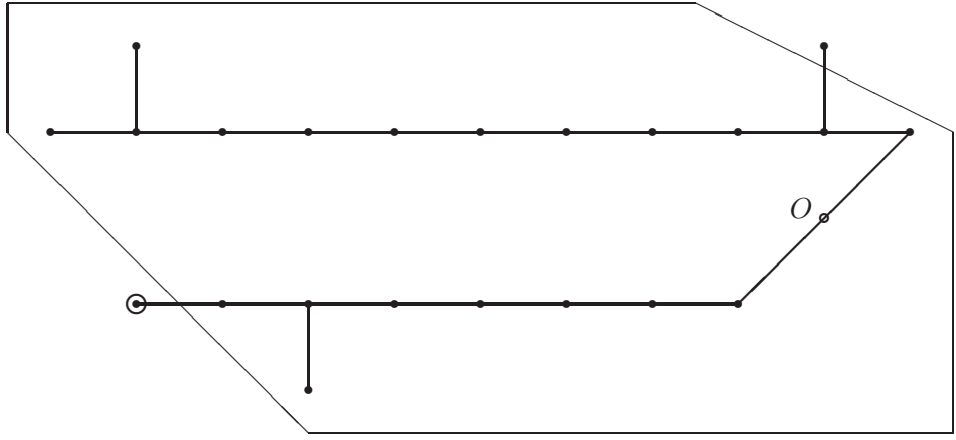


\subsection{\#17:  $R(M) = D_4 E_8^2$}

quasi-elliptic: $[0,0,0,0,t^5+t^7]$\\
Reducible fibers: $\tilde D_4$ at $t=1$, $\tilde E_8$ at $0, \infty$.
This fibration also features in \cite{Schroeer}, for instance.




%

\subsubsection*{Connection with \#15}

$u = x/t^2$ extracts $\tilde D_{16}$ from the two $\tilde E_8$'s
connected by the zero section.
Far components of $\tilde E_8$ serve as zero and 2-torsion section.
$D_4$ is preserved; the additional component to form a new $\tilde
D_4$ consists in the curve
\[
C=\{x=0, y^2=t^5(t+1)^2\}.
\]
which only meets the double component of $\tilde D_4$ and the far
components of the two $\tilde E_8$'s.

\begin{figure}[ht!]
\setlength{\unitlength}{.45in}
\begin{picture}(7,5.2)(-1.75,-.5)
\thicklines

\multiput(0,3)(1,0){8}{\circle*{.1}}
\put(0,3){\line(1,0){7}}

\multiput(0,1)(1,0){8}{\circle*{.1}}
\put(0,1){\line(1,0){7}}

\put(7,1){\circle{.2}}
\put(7,3){\circle{.2}}

\put(5,4){\circle*{.1}}
\put(5,3){\line(0,1){1}}
\put(5,0){\circle*{.1}}
\put(5,0){\line(0,1){1}}




%


\put(0,2){\circle{.1}}
\put(0,1){\line(0,1){2}}
\put(0.1,2.1){$O$}

\multiput(-3,2)(1,0){3}{\circle*{.1}}
\multiput(-2,3)(0,-2){2}{\circle*{.1}}
\put(0,2){\line(-1,0){3}}
\put(-2,1){\line(0,1){2}}

\thinlines

\put(-0.4,-.35){$u=\infty$}
\put(-3.4,0.65){$u=0$}

\put(-0.5,-0.5){\framebox(7,5){}}
\put(-3.5,0.5){\framebox(2,3){}}

\end{picture}
\caption{$\tilde D_{16}$ divisor supported on two $\tilde E_8$'s
and zero section}
\label{Fig:17-15}
\end{figure}
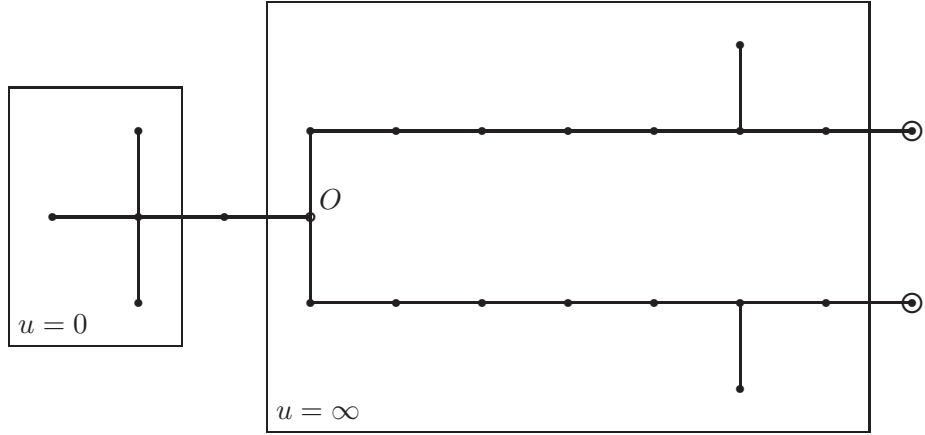


\subsection{\#18:  $R(M) = D_{20}$}

quasi-elliptic, e.g.~$[0,t^3,0,0,t]$ with $\tilde D_{20}$ at $\infty$.







%

\section{Uniqueness of the genus~1 fibrations}
\label{s:unique}

In the previous section, we have proved that the supersingular K3 surface
$X$\/ admits each  genus~$1$ fibration from Table \ref{Tab:fibr}.
The proof of Theorem \ref{thm} will thus be completed by showing
the uniqueness of each fibration.
Here it could be possible to argue with the automorphism group
of $X$ or  to pursue other lattice theoretic ideas.
We decided to follow a different approach following \cite{RS2}
that illustrates how quasi-elliptic fibrations can be used
to work out models and moduli of supersingular K3 surfaces.
Namely the uniqueness problem is stated purely in terms of genus
one fibrations:

\begin{Proposition}
\label{prop}
Let $k$ be an algebraically closed field of characteristic two.
For each genus~1 fibration from Table \ref{Tab:fibr},
there is exactly one model over $k$ up to isomorphism.
\end{Proposition}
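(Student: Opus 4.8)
The proof is carried out fibration by fibration, following the philosophy of \cite{RS2} that the Weierstrass data of a genus~1 fibration on a supersingular K3 surface is essentially rigid. Fix a fibration \#$i$ from Table~\ref{Tab:fibr}. By a \emph{model} I mean a K3 surface $Y$ over $k$ together with a genus~1 fibration $Y\to\PP^1$ whose reducible fibers and \MoW\ group (and hence, via the height pairing, whose essential lattice $M$) coincide with those recorded for \#$i$; by Theorem~\ref{thm:jac} such a fibration has a section, hence a minimal Weierstrass form
\[
y^2+a_1xy+a_3y=x^3+a_2x^2+a_4x+a_6,\qquad a_j\in k[t],\ \deg a_j\le 2j,
\]
with $a_1\equiv a_3\equiv 0$ precisely in the quasi-elliptic cases. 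The first point is that a model is automatically a model of \emph{our} surface: by Shioda--Tate $\NS(Y)=U+M$, and since $U+M$ shares signature and discriminant form with $\NS(X)$ — exactly the lattice input behind Table~\ref{Tab:fibr} — one gets $\NS(Y)\cong U\oplus D_{20}$, so $\rho(Y)=22$, $Y$ is supersingular, and $\mathrm{disc}\,\NS(Y)=-4$ forces Artin invariant~$1$; thus $Y\cong X$. The content of the proposition is therefore the uniqueness of the Weierstrass data up to coordinate changes.

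The available coordinate changes are the action of $\mathrm{PGL}_2(k)$ on the base parameter $t$ together with the admissible Weierstrass transformations $x\mapsto u^2x+r$, $y\mapsto u^3y+sx+w$ with $u\in k^\times$ and $r,s,w\in k[t]$ of the degrees compatible with the K3 normalization (the quasi-elliptic cases being comparatively rigid, since the equation then involves only $a_2,a_4,a_6$). For each \#$i$ the plan is: (1)~use $\mathrm{PGL}_2$ to normalize the positions of the reducible fibers, sending three of them to $0,1,\infty$ and leaving the positions of any further reducible fibers, or the residual normalization when there are fewer than three, to be pinned down later; (2)~run Tate's algorithm at these places to translate the prescribed Kodaira types into divisibility and leading-term conditions on the $a_j$, cutting the coefficients down to finitely many undetermined scalars; (3)~impose that $\MW(Y)$ contain the prescribed torsion and non-torsion sections of the prescribed heights — polynomial conditions in the scalars — and use the remaining Weierstrass transformations to normalize them, verifying that the solution set is a single orbit, which one identifies with the model written down in Section~\ref{s:eq}. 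Where convenient the connections of Section~\ref{s:eq} can be invoked to shorten the work: each exhibits the new fibration as $|D|$ for a divisor $D$ built combinatorially from fibers and sections of the old one, so that, keeping track of the data that recovers the old fibration from the new, uniqueness propagates along the connection diagram.

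The main difficulty lies in step~(3) for the fibrations that become least rigid once the fiber configuration is fixed: on the one hand the quasi-elliptic fibrations with only one or two reducible fibers — notably \#18, whose single fiber $I_{16}^*$ is moved only by the affine group fixing $t=\infty$, and \#15, \#16, \#17 — where the surviving parameters must be eliminated using only the fiber conditions and the scant quasi-elliptic transformations; on the other hand the elliptic fibrations \#6 and \#8 of \MoW\ rank~$2$, where the reducible fibers alone leave a positive-dimensional family and one genuinely needs the extra sections of prescribed height to cut it down to a point. In these cases one must write out and solve the small non-linear system defining the fiber type together with the section heights, discard the branches along which the configuration degenerates, and check throughout, in characteristic~$2$, that the fibration remains elliptic, respectively quasi-elliptic. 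Once these representative cases are settled, the remaining fibrations follow by the same mechanism or along the connection diagram.
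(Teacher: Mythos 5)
Your reduction of the problem to uniqueness of Weierstrass data (via Shioda--Tate and the lattice argument showing any model is a model of $X$) is sound, and your step-by-step normalization plan is essentially how the paper treats the \emph{quasi-elliptic} cases (normalize fiber positions, translate fiber types into conditions on $a_4,a_6$ via the quasi-elliptic discriminant, then impose the torsion sections). But the plan is vaguest exactly where the work is hardest, and at those points the paper uses two ideas you are missing. First, for the elliptic fibrations the paper does not grind through Tate's algorithm at all: the extremal ones are, by Ito's theorem, purely inseparable base changes of extremal \emph{rational} elliptic surfaces, so uniqueness is inherited from the rational case; and the three non-extremal ones (\#3, \#6, \#8) carry $3$- or $4$-torsion, hence are pulled back from the universal elliptic curves for that level structure, and the base change is pinned down by the positions and types of the singular fibers. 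Your alternative for \#6 and \#8 --- imposing ``sections of prescribed height'' as ``polynomial conditions in the scalars'' --- is not something you can execute as stated: the height of a section is not a polynomial condition on the Weierstrass coefficients in any direct sense, and you give no mechanism for parametrizing the candidate sections. Second, for the quasi-elliptic fibrations with five reducible fibers your $\mathrm{PGL}_2$ normalization fixes only three fiber positions and leaves two moduli unaccounted for; the paper kills these by observing that every genus~$1$ fibration on $X$ admits a model with all of $\NS$ defined over $\F_4$, which forces the reducible fibers to sit at $\PP^1(\F_4)$.

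A further concern is your claim that ``uniqueness propagates along the connection diagram.'' The connections of Section~\ref{s:eq} are existence statements: on one specific model of \#$i$ a divisor is exhibited whose linear system gives \#$j$. To transport uniqueness you would need the converse --- that \emph{every} model of \#$j$ contains a divisor configuration returning \#$i$, and that all such configurations are conjugate under automorphisms of the surface --- which is precisely the kind of statement the proposition is trying to establish and is not supplied by the figures. The paper never uses the connections for uniqueness; each fibration is handled directly. So while your framework is the right one, the proof as proposed has genuine gaps at the rank-$2$ elliptic cases and at the many-fiber quasi-elliptic cases, and the shortcut meant to cover the remaining cases does not work as described.
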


\begin{Remark}
In comparison, on a general Kummer surface of product type
the configuration of singular fibers does usually not determine
a unique elliptic fibration by \cite{Oguiso}.
This is visible from the $2$-torsion points,
see the equations in \cite{KS}.
\end{Remark}

\begin{proof}[Proof of Proposition \ref{prop} for elliptic fibrations]
Suppose $S\to\PP^1$ is an elliptic fibration from Table
\ref{Tab:fibr}.
If the fibration is extremal, then it is a purely inseparable base
change of an extremal rational elliptic surface by \cite{Ito2}.
The uniqueness thus follows from the corresponding statement for
rational elliptic surfaces (cf.~\cite{Ito2}).
For \#11, an alternative proof can be found in \cite{SS2}.

For the remaining three elliptic fibrations, we can still argue
with extremal elliptic surfaces because there is either $3$- or
$4$-torsion in $\MW(S)$.
This implies that they arise from some universal elliptic
curves by base change.
For $3$-torsion and $j$-invariant zero (\#8), this universal elliptic
curve is
\[
y^2 + sy = x^3.
\]
Locating the singular fibers of type $\tilde E_6$ at $0, 1$ and
$\infty$, we deduce that the base change can only be $t\mapsto
s=t^2(t-1)^2$.
For $4$-torsion, we are dealing with the universal elliptic
curve
\begin{eqnarray}
\label{eq:4}
y^2 + xy + sy = x^3 + sx^2.
\end{eqnarray}
In any characteristic other than two, this has three singular fibers:
 type $I_4$ at $0$, $I_1$ at $s=1/16$  and $I_1^*$ at $\infty$.
In characteristic two, however, the latter two are merged, but
the fiber type $I_1^*$ stays the same with wild ramification of
index one.
That is, there are only two singular fibers, and each is reducible.
Since fibration \#6 has only two reducible fibers as well, it arises
from \eqref{eq:4} through a cyclic base change, i.e.~via $t\mapsto
s=t^3$.
Similarly, we also deduce that \#3 has no irreducible singular
fibers.
Locating the singular fibers at $0, 1$ and $\infty$, the fibration
thus comes from the base change
\[
t\mapsto s=t^2(t+1)^2.
\]
In particular, the elliptic fibration is unique, and we obtain
the model for \#3 in \eqref{eq3}.
\end{proof}

In order to complete the proof of Proposition \ref{prop},
we need a few more general facts about  quasi-elliptic fibrations.
A good general reference would be the last chapter of \cite{CD}.
We have already mentioned that an elliptic curve given by a $5$-tuple
$[a_1,a_2,a_3,a_4,a_6]$ is quasi-elliptic in characteristic two
if and only if $a_1\equiv a_3\equiv 0$.
Completing the cube, we thus obtain the "traditional" Weierstrass
form
\begin{eqnarray}
\label{eq:qe}
S:\;\;\; y^2 = x^3 + a_4 x + a_6.
\end{eqnarray}
Contrary to the usual situation, however, this equation still admits
the following automorphisms:
\[
x\mapsto x+\alpha^2, \;\;\; y\mapsto y+\alpha x+\beta
\]
in addition to rescaling $x$ and $y$ by a second resp.~third power.
Hence $a_4$ and $a_6$ are unique up to the according scaling and
adding fourth powers resp.~squares.
Quasi-elliptic fibrations  admit a discriminant that detects
the reducible singular fibers:
\[
\Delta = a_4 (a_4')^2 + (a_6')^2.
\]
Here the prime indicates the formal derivative with respect to
the parameter of the base curve $\PP^1$.
As a general rule, the order of vanishing of $\Delta$ equals the
rank of the Dynkin diagram associated to (the non-identity components
of) the  reducible singular fiber.
It suffices to distinguish two cases to normalize \eqref{eq:qe}:
\begin{enumerate}[(i)]
\item
If $\Delta$ is a square, then so is $a_4$. Thus we can set $a_6=t\sqrt\Delta$
and $a_4=\alpha^2$ where $\alpha$ does not contain any summand
with even exponent.
\item
If there is a fiber of type $III$ or $III^*$,
then $a_6\equiv 0$, and $a_4$ exactly encodes the singular fibers.
\end{enumerate}
We shall now prove the uniqueness for a few quasi-elliptic fibrations
from Table \ref{Tab:fibr}.
We choose some cases that illustrate the overall ideas.
All other fibrations can be treated along the same lines.

\begin{proof}[Proof of Proposition \ref{prop} for \#13]
Due to the singular fibers of type $\te_7$, we are in case (ii)
above, i.e.~$a_6=0$.
Then fiber types $\td_n$ and $\te_7$ require exact vanishing order
$2$ resp.~$3$ of $a_4$.
By M\"obius transformation, we can thus normalize \eqref{eq:qe}
uniquely as
\[
S:\;\; y^2 = x^3 + t^3 (t+1)^2 x.
\]
The two-torsion section $(0,0)$ implies that $\sigma=1$ as required.
\end{proof}

\begin{proof}[Proof of Proposition \ref{prop} for \#9 and \#17]
We locate the singular fiber of type $\td_4$ at $t=1$ and the other
two reducible fibers at $0$ and $\infty$.
Then $\Delta=t^8(t-1)^4$.
The above considerations reduce the Weierstrass form \eqref{eq:qe}
to
\[
S:\;\; y^2 = x^3 + (ut+vt^3)^2 x + t^7+t^5.
\]
Here the special fiber at $t=0$ has type $\te_8$ if $u=0$ and $\td_8$
otherwise;
the analogous statement holds at $t=\infty$.
We distinguish three cases.
First, if $\boldsymbol{u=v=0}$, then we derive \#17 in a unique
way.
Secondly, if $\boldsymbol{uv=0}$ without both vanishing,
then one fiber has type $\te_8$ and the other $\td_8$.
Note that such a surface has $\NS(S) = U \oplus D_4 \oplus D_8 \oplus E_8$
and thus Artin invariant $\sigma=2$, since the fiber type $\te_8$
on a quasi-elliptic surface does not accomodate $2$-torsion sections.
In other words, we derive a one-dimensional family of supersingular
K3 surfaces such that each member except for \#17 has Artin invariant
$\sigma=2$.

Finally we consider the case $\boldsymbol{uv\neq 0}$.
This yields a two-dimensional family of supersingular K3 surfaces,
such that the general member has $\NS(S)=U+D_4+2D_8$ and Artin
invariant $\sigma=3$.
Here the Artin invariant drops after either specializing to the
previous family or imposing some two-torsion section.
The fibration \#9 requires three non-trivial two-torsion sections.
Their intersection behavior with the reducible fibers can be predicted
from the height pairing as follows:

\begin{table}[ht!]
\begin{tabular}{c|ccc}
fiber & $\td_4$ & $\td_8$ & $\td_8$\\
\hline
fiber & id & far & far\\
comp & non-id & near & far\\
met & non-id & far & near
\end{tabular}
\end{table}

We first investigate a two-torsion section $P=(X,Y)$ that fits
into the first row.
Here $X$ and $Y$ are polynomial in $t$ of degree at most $4$ resp.~$6$.
At $t=0$, it is immediate that $t|X, t^2|Y$.
This corresponds to blowing up the surface
once at the point $(x,y,t)=(0,0,0)$ and then along the exceptional
divisor.
In the affine chart $x=tx', y=t^2y''$ this yields
\begin{eqnarray}
\label{eq:9-1}
S:\;\; ty''^2 = x'^3 +(u+vt)^2 x' + t^4+t^2.
\end{eqnarray}
Here the near simple component of the $\td_8$ fiber is given by
$t=x'=0$.
The section has to follow the double component $\{t=0, x'=u\}$
through the resolution,
so $X=t(u+t\hdots)$.
Successively this yields $t^3|Y$ and $X=t(u+t/\sqrt{u}+t^2\hdots)$.
By symmetry, the same argument applies to the fiber at $\infty$.
We deduce $\deg(Y)\leq 3$ and $X=t^3v+t^2/\sqrt{v}+\hdots$.
Combining the information from $t=0$ and $t=\infty$, we deduce
$u=v$
and find a unique section
$P=t(u+t/\sqrt{u}+ut^2), u^{3/2}t^3)$.
Again we have thus found a family of supersingular K3 surfaces
with Artin invariant $\sigma\leq 2$.

We continue by imposing a torsion section $Q=(\mathcal X, \mathcal
Y)$ of the second kind, say meeting the fiber at $\infty$ at a
far component.
As before, this implies $\deg(\mathcal Y)\leq 3$ and $\mathcal
X=t^3u+t^2/\sqrt{u}+\hdots$.
By \eqref{eq:9-1}, the near component of the fiber at $t=0$ is
met if and only if $t^2|\mathcal X, \mathcal Y$, so
$\mathcal X=t^3u+t^2/\sqrt{u}$.
Finally the intersection of a non-identity component at $t=1$ requires
$(t+1)|\mathcal X, \mathcal Y$.
Hence $u=1/\sqrt{u}$, i.e.~$u^3=1$.
The three possible choices are identified by scaling $x$ by third
roots of unity.
Hence we can assume $u=1$ and find the section $Q=(t^2(t+1), t^2(t+1))$.
This shows that the quasi-elliptic fibrations \#9 and \#17 are
unique.
\end{proof}

For all other quasi-elliptic fibrations from Table \ref{Tab:fibr},
uniqueness can be proven along similar lines.
The cases with five reducible fibers which at first sight might
look most complicated are greatly simplified by the following easy observation:
Any genus~$1$ fibration from Table \ref{Tab:fibr} has Artin invariant
$\sigma=1$; thus it gives a model of our supersingular K3 surface $X$.
Now $X$ has a model with all $\NS(X)$ defined over $\F_4$.
By the argumentation in Section \ref{s:g=1}, it follows that any
genus~$1$ fibration on $X$ admits such a model, too.
For the genus~$1$ fibrations with five reducible fibers, this identifies
the locus of reducible fibers on the base curve as $\PP^1(\F_4)$
which essentially fixes the Weierstrass form \eqref{eq:qe}.
Then it remains to check for precise fiber types and for fiber
components to be defined over $\F_4$.

For instance, for \#2 this means that we can work with a Weierstrass
form
\[
S:\;\; y^2 = x^3 + \alpha t^2 x + (t^3+1)^3\;\;\; (\alpha\in\F_4).
\]
Here the components of the fiber at $t=1$ are encoded in the roots
of the polynomial $T^3+\alpha T + 1$.
It is easily checked that this polynomial splits over $\F_4$
if and only if $\alpha=0$.
We derive the model for \#2 in \ref{ss:2} with \MoW\ group as specified.
The details for the remaining cases are left to the reader.

\section{Points and lines in $\PP^2(\F_4)$}
\label{s:config}

Consider the elliptic fibration \#1
with $R(L)=A_5^4$ and $\MW\cong\Z/3\Z \times \Z/6\Z$.
There are 42 obvious $(-2)$ curves 
formed by the 24 components of the singular fibers
and the 18 torsion sections.
It is easily verified that the configuration of these 42 rational curves
is the incidence graph of the 21 points and 21 lines of $\PP^2(\F_4)$ (cf.~\cite{DK}, \cite{KK}).
This gives another way to see the large finite automorphism group $\mbox{PGL}_3(\F_4)\times\Z/2\Z$
acting on $X$.
We remark that the 42 roots of $\NS(X)$ under consideration 
are known as the first Vinberg batch of roots for $\mbox{I}_{1,21}$
(which contains $\NS(X)$ as even sublattice, see \cite[p.~551]{CS}).
Note also that fiber components and sections 
over $\F_2$ induce the incidence graph of $\PP^2(\F_2)$,
so our identification is compatible with the Galois action.

For each of the other 17 fibrations in our list,
most or all of the $(-2)$ curves from $R(L)$
and torsion sections can already be seen in the $\PP^2(\F_4)$ picture.
For example, for the quasi-elliptic fibration \#2 with $R(L)=D_4^6$ and $\MW=(\Z/2\Z)^4$,
fiber components and sections give 41 rational curves
which correspond to all but one of the 42 vertices of the incidence graph.
For a few other cases, see the discussion below.

From our classification of genus 1 fibrations on $X$
we can extract information about specific subgraphs of the incidence graph:

\begin{Theorem}
\label{thm:inc}
The incidence graph of points and lines in $\PP^2(\F_4)$ 
does not contain any cycle of length $14$ or $2n$ with $n\geq 10$.
\end{Theorem}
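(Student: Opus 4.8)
The plan is to translate a cycle in the incidence graph into a divisor on $X$ of a controlled shape, and then to rule out the forbidden lengths using the classification in Table \ref{Tab:fibr}. Recall that the incidence graph is bipartite (vertices = $21$ points $\sqcup$ $21$ lines, edges = incidences), so every cycle has even length $2n$; a $2n$-cycle visits $n$ points and $n$ lines alternately. Under the identification of Section \ref{s:config}, the $42$ vertices become $42$ specific $(-2)$-curves on $X$ (the $24$ fiber components of fibration \#1 together with its $18$ torsion sections), and an edge of the graph means the two curves meet transversally in one point. Thus a $2n$-cycle $v_1, v_2, \ldots, v_{2n}, v_1$ gives a cyclic chain of $(-2)$-curves $C_1, \ldots, C_{2n}$ with $C_i \cdot C_{i+1} = 1$ (indices mod $2n$) and $C_i \cdot C_j = 0$ for non-adjacent $i,j$ — that is, the dual graph of $C_1 + \cdots + C_{2n}$ is a $2n$-gon. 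Setting $D = C_1 + \cdots + C_{2n}$, one computes $D^2 = 2n\cdot(-2) + 2\cdot 2n\cdot 1 = 0$, and $D$ is effective. Moreover the dual graph of $D$ is exactly the extended Dynkin diagram $\tilde A_{2n-1}$, so if $D$ (or a multiple) moves in a pencil it is the $\tilde A_{2n-1}$ fiber of a genus $1$ fibration; in any case, by the discussion in Section \ref{s:g=1} (via \cite{PSS}), $D$ induces a genus $1$ fibration on $X$, and since $D$ has the shape of a Kodaira fiber $I_{2n}$, a fibration having $D$ as a singular fiber exists.

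The first key step is therefore: \emph{a $2n$-cycle in the incidence graph produces a genus $1$ fibration on $X$ with a reducible fiber whose non-identity part is a root lattice containing $A_{2n-1}$} (the cycle gives an $\tilde A_{2n-1} = I_{2n}$ configuration among the $(-2)$-curves). The second key step is to read off from Table \ref{Tab:fibr} which root types $R(M)$ of fibrations on $X$ can possibly contain an $A_{2n-1}$ summand. Scanning the $18$ rows: the $A$-type summands occurring are $A_1$ (in \#4, 5, 7, 11, 12), $A_2$ ($\subset A_3$ components — but note \#7 has an explicit $A_3$), $A_3$ (\#7), $A_5$ (\#1), $A_7$ (\#3), $A_9$ (\#4), $A_{11}$ (\#6, 7), $A_{15}$ (\#10), $A_{17}$ (\#11). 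Since a fiber of Kodaira type $I_{2n}$ contributes precisely $A_{2n-1}$, and a chain of $(-2)$-curves forming an $I_{2n}$ must sit inside the reducible-fiber-plus-section lattice, the only even cycle lengths $2n$ that can be realized are those with $2n-1 \in \{1,3,5,7,9,11,15,17\}$, i.e. $2n \in \{2,4,6,8,10,12,16,18\}$ — wait, one must be careful: an $I_{2n}$ fiber has $2n$ components, matching the $2n$ curves $C_i$, but the chain could also be assembled from components of \emph{several} fibers together with sections. This is the real subtlety. The honest statement one extracts is: an $I_{2n}$-configuration of $(-2)$-curves spans a sublattice isometric to $\tilde A_{2n-1}$; its image in $\NS(X)$ under the fibration it defines is a fiber, and the possible fiber types are constrained by the classification. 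In particular $2n = 14$ would force an $A_{13}$, and $2n \geq 20$ would force $A_{2n-1}$ with $2n-1\geq 19$, and \emph{no} row of Table \ref{Tab:fibr} admits such a summand (the largest is $A_{17}$ in \#11, and there is no $A_{13}$ anywhere), giving the theorem.

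The main obstacle — the point that needs genuine care rather than bookkeeping — is the step "an $I_{2n}$-cycle of $(-2)$-curves forces an $I_{2n}$ (equivalently $A_{2n-1}$) inside the fiber/section lattice of \emph{some} fibration in the list." The cycle $C_1 + \cdots + C_{2n}$ is a nef, effective, primitive divisor of square $0$, hence the fiber class $F$ of a genus $1$ fibration $\pi$; by Theorem \ref{thm:jac} this $\pi$ is jacobian, so $\NS(X) = U + M$ with $M$ the essential lattice of one of the $18$ fibrations, and by Table \ref{Tab:fibr} we know $R(M)$. The divisor $C_1+\cdots+C_{2n}$ is \emph{a} fiber of $\pi$ of Kodaira type $I_{2n}$; the fiber-component sublattice of $\pi$ therefore contains $A_{2n-1}$, so $A_{2n-1} \hookrightarrow R(M)'$ (the primitive closure, which by \cite{ShMW} differs from $R(M)$ only by torsion glue). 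Since $A_{2n-1}$ is a root lattice, it embeds into $R(M)'$ essentially iff its components fit among the $A$-type summands — and a direct check of the list shows $A_{13}$ occurs in none of them, and $A_{n}$ for $n\ge 19$ occurs in none. Thus no $14$-cycle and no $2n$-cycle with $n\ge 10$ exists. One should double-check the edge cases: that the $C_i$ really are distinct (a cycle in a graph has distinct vertices, and distinct vertices of the incidence graph are distinct curves), that $F = \sum C_i$ is primitive in $\NS(X)$ (it is indivisible since it is a reduced fiber of type $I_{2n}$, cf.\ the proof of Theorem \ref{thm:jac}), and that the fibration it defines genuinely has $\sum C_i$ as an $I_{2n}$ fiber rather than a sum of fibers (it does, because the $C_i$ form a connected cycle, which cannot decompose as a disjoint union of Kodaira fibers). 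These checks are routine given the machinery already set up in Sections \ref{s:g=1}--\ref{s:class}.
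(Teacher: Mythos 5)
Your proposal is correct and follows essentially the same route as the paper: a $2n$-cycle in the incidence graph yields an effective divisor of square zero with the shape of an $I_{2n}$ fiber, which by Section \ref{s:g=1} and Theorem \ref{thm:jac} induces a jacobian genus~1 fibration having that $I_{2n}$ as a singular fiber, and Table \ref{Tab:fibr} contains no fibration with an $A_{13}$ or $A_{2n-1}$ ($n\geq 10$) summand. Your additional checks (primitivity, connectivity, distinctness of the curves) are sound and simply make explicit what the paper leaves to the reader.
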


\begin{proof}
If there were such a cycle,
then we would find a corresponding effective divisor on $X$
via the elliptic fibration \#1.
As explained in Section \ref{s:g=1},
this divisor would induce an elliptic fibration on $X$
with the cycle as singular fiber of type $I_{2n}$
(jacobian by Theorem \ref{thm:jac}).
Then the classification of genus 1 fibrations on $X$ leads to the desired contradiction. 
\end{proof}

\begin{Remark}
Alternatively one can infer $n<11$ from the Shioda-Tate formula and $n\neq 10$ from \cite{S-max},
but we are not aware of an easy argument ruling out $n=7$.
\end{Remark}

\begin{Proposition}
Let $n\in\N$.
Assume that there are $n$ points $P_i\in \PP^2(\F_4)\; (i\in\Z/n\Z)$ such that $P_i, P_{i+1}, P_j$  are never collinear for distinct $i, i+1, j$. 
Then $n\in\{3,4,5,6,8,9\}$.
Conversely for each such $n$, there is a $2n$-cycle in $\PP^2(\F_4)$.
\end{Proposition}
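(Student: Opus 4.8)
The plan is to turn the combinatorial hypothesis into the existence of a Kodaira fiber of type $I_{2n}$ on $X$, exactly as in the proof of Theorem~\ref{thm:inc}, read off the admissible values of $n$ from the classification, and settle the converse by explicit constructions in $\PP^2(\F_4)$. We may assume $n\ge 3$: for $n\le 2$ no honest polygon is meant, since the lines $\overline{P_iP_{i+1}}$ are then not even pairwise distinct. Put $\ell_i:=\overline{P_iP_{i+1}}$ for $i\in\Z/n\Z$. Since any two coinciding points are collinear with every third point, the hypothesis forces $P_i\ne P_{i+1}$, so each $\ell_i$ is defined; it then also forces the $P_i$ to be pairwise distinct, the $\ell_i$ to be pairwise distinct, and it says precisely that $P_j\notin\ell_i$ whenever $j\notin\{i,i+1\}$. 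Hence $P_0,\ell_0,P_1,\ell_1,\dots,P_{n-1},\ell_{n-1}$ is a chordless $2n$-cycle in the incidence graph of points and lines of $\PP^2(\F_4)$. Under the identification of this graph with the $42$ smooth rational curves on $X$ formed by the fiber components and torsion sections of fibration~\#1 (Section~\ref{s:config}) --- incidence corresponding to intersection number $1$ and non-incidence to $0$ --- the cycle becomes $(-2)$-curves $C_0,\dots,C_{2n-1}$ with $C_k\cdot C_{k+1}=1$ (indices mod $2n$) and $C_k\cdot C_{k'}=0$ otherwise, so that $D:=\sum_k C_k$ is effective, satisfies $D^2=0$, and has the shape of a singular fiber of type $I_{2n}$.

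For the forward implication, by \cite{PSS} (cf.\ Section~\ref{s:g=1}) the linear system $|D|$ induces a genus~$1$ fibration on $X$ having $D$ as a singular fiber; it is jacobian by Theorem~\ref{thm:jac} and elliptic because $I_{2n}$ is multiplicative. Now Theorem~\ref{thm} and Table~\ref{Tab:fibr} list all genus~$1$ fibrations on $X$, and reading each summand $A_{m-1}$ of the root type $R(M)$ of an elliptic fibration as a fiber of type $I_m$ --- with the $A_1$-ambiguity resolved in favor of $I_2$ as in Section~\ref{s:class} --- the multiplicative fiber types occurring on $X$ are $I_6$ (\#1), $I_8$ (\#3), $I_2$ and $I_{10}$ (\#4), $I_{12}$ (\#6), $I_4$ and $I_{12}$ (\#7), $I_{16}$ (\#10), and $I_2$ and $I_{18}$ (\#11). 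Hence $2n\in\{2,4,6,8,10,12,16,18\}$, so $n\in\{1,2,3,4,5,6,8,9\}$; combined with $n\ge 3$ this yields $n\in\{3,4,5,6,8,9\}$.

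For the converse, for each admissible $n$ one produces a cyclic configuration of $n$ points with the required property, equivalently a chordless $2n$-cycle in the incidence graph. For $n\in\{3,4,5,6\}$ it suffices to take $n$ points in general position (no three collinear), which exist since $\PP^2(\F_4)$ contains a hyperoval, i.e.\ an arc of $6$ points; for points in general position the lines $\ell_i$ are automatically pairwise distinct and the $2n$-cycle automatically chordless. For $n=8$, take two distinct lines meeting in a point $O$, let $A_0,\dots,A_3$ and $B_0,\dots,B_3$ be the four points of the first and the second line other than $O$ (every line of $\PP^2(\F_4)$ carries $5$ points), and set $P_{2k}=A_k$, $P_{2k+1}=B_k$; then each secant $\overline{P_iP_{i+1}}$ avoids $O$ and meets the union of the two lines only in $\{P_i,P_{i+1}\}$, which yields chordlessness and distinctness in one stroke. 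For $n=9$ one exhibits such a configuration directly, or --- most naturally in the present context --- checks that the $18$ components of the $I_{18}$-fiber of fibration~\#11 occur among the $42$ curves of \#1 and reads off the corresponding $9$ points and lines (the $I_{16}$-fiber of \#10 re-confirms $n=8$ in the same way).

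The forward implication being immediate once the dictionary above is set up, the real content is the converse for $n=8$ and $n=9$: since $\PP^2(\F_4)$ has no arc of more than $6$ points, realizing a chordless $2n$-cycle with $n\ge 7$ forces one to permit the non-consecutive collinearities that the hypothesis still allows, so one must verify that a genuine such configuration exists --- equivalently, that the relevant $I_{16}$- and $I_{18}$-fibers are supported on the $42$ $(-2)$-curves of fibration~\#1.
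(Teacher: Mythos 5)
Your reduction of the hypothesis to a chordless $2n$-cycle in the incidence graph, hence to an effective divisor of type $I_{2n}$ and an elliptic fibration on $X$ containing it as a fibre, is exactly the paper's route: the paper simply quotes Theorem~\ref{thm:inc}, whose proof is the argument you spell out, and your enumeration of the multiplicative fibre types available in Table~\ref{Tab:fibr} is correct. Your existence constructions for $n\le 6$ (subsets of a hyperoval) and for $n=8$ (alternating points on two concurrent lines, omitting their intersection) are sound and, if anything, more explicit than the paper's appeal to horizontal, vertical and diagonal lines in the affine plane over~$\F_4$.

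The gap is the case $n=9$, which you yourself identify as part of ``the real content'' and then do not prove. ``One exhibits such a configuration directly'' is not an exhibition, and your fallback --- checking that the $I_{18}$ fibre of fibration \#11 is supported on the $42$ curves of \#1 --- is circular: $X$ carries many more $(-2)$-curves than those $42$, and the uniqueness of \#11 up to isomorphism does not place its fibre components among them; the only way to verify that claim is to produce an $\ta_{17}$ cycle inside the incidence graph, which is precisely the configuration whose existence is at stake. The paper closes this case by writing down nine explicit points, namely $(0,0)$, $(\varrho^2,0)$, $(\varrho,1)$, $(\varrho^2,1)$, $(\varrho,\varrho)$, $(\varrho^2,\varrho)$, $(\varrho,\varrho^2)$, $(1,\varrho^2)$ and the infinite point $[0,1,0]$, and your argument needs some such explicit configuration (with the collinearity conditions checked) to be complete.
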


\begin{proof}
All other cases are ruled out by Theorem \ref{thm:inc},
so the first statement of the proposition follows.
As for the existence part,
all $2n$-cycles for $n<9$ can easily be realized in the affine plane $\A(\F_4)$
by way of horizontal and vertical lines and the diagonal, say.
As for the $18$-cycle, one can connect, for instance,
the affine points
$(0,0), (\varrho^2,0), (\varrho,1),(\varrho^2,1),(\varrho,\varrho),(\varrho^2,\varrho),(\varrho,\varrho^2),(1,\varrho^2)$ and the infinite point $[0,1,0]$.
\end{proof}

We can be even more specific by analyzing the roots perpendicular to the given $2n$-cycle
(thus forming fiber components of the induced elliptic fibration),
and the points and lines giving rise to sections.
In the counts, $a+b$ indicates the partition between points and lines in $\PP^2(\F_4)$.

\subsection{$\mathbf{\ta_5}$} There are $9+9$ disjoint roots, forming another three $\ta_5$ hexagons,
plus $9+9$ sections (roots that meet exactly one of the $\ta_5$ vertices) comprising the full $\MW$ group.
Of course, this was expected since we  started our current investigation exactly with this fibration.

\subsection{$\mathbf{\ta_7}$} 
\label{ss:a7}
$7+7$ disjoint roots, forming the remaining $\ta_7$ and $\td_5$ fibers of \#3,
and $8+8$ sections. 
Here $\MW$ has rank 1, so the sections can only comprise part of it.

\subsection{$\mathbf{\ta_9}$}
$6+6$ disjoint roots, forming the other $\ta_9$ of \#4 and two isolated  $A_1$'s; 
$5+5$ sections, accounting for
the full MW group.

\subsection{$\mathbf{\ta_{11}}$}
\label{ss:a11}
There are two possibilities.  In one case, the vertices of
the same parity on both the hexagon and its dual are always collinear.
Then there are $4+4$ disjoint roots, forming a $\td_7$ system, so we have
the case of \#6 with MW rank 2.
There are $6+6$ sections.  In the other case, either the hexagon
or its dual is a "hyperoval", with no three points collinear
(and the other has vertices of the same parity collinear).
Here there are $6+4$ disjoint roots, forming  $\te_6$ and $A_3$ of \#7.
There are $6+0$ sections, accounting for the full MW group.
(The 0 was expected because no line meets a hyperoval in exactly
one point).

\subsection{$\mathbf{\ta_{15}}$}
\label{ss:a15}
Here if we look at points of the same parity on the octagon
and its dual, three of the resulting four sets of 4 points are collinear
and the last is in general linear position.  
There are $2+3$ disjoint
roots, forming a $D_5$ root system, consistent with the case \#10.
There are $4+0$ sections (none for the octagon with
two 4-point lines), accounting for the full MW group.

\subsection{$\mathbf{\ta_{17}}$}
Just 1+1 disjoint roots, so we see only part of the $A_1^3$ .
configuration of \#11. 
(Happily the disjoining roots are also disjoint from
each other as they must be to be part of $A_1^3$.)  
There are $3+3$ sections,
again fully accounting for the MW group.

\subsection*{$\mathbf{\td_n}$ configurations}
Along similar lines, we can study other configurations
 in the incidence graph of $\PP^2(\F_4)$.
The $\td_{2n}$ series is much like  $\ta_{2n-1}$:
 instead of a polygon,
we have a path whose first and last line contain three points each
rather than two -- or dually where the first and last vertices
have two terminal lines each instead of one.  
Here the lattices in our classification 
let us see everything up to $D_{20}$ except $D_{14}$ and $D_{18}$.
Thus $\td_{14}$ and $\td_{18}$ are impossible.
We will rule out $\td_{20}$ separately below. 
Conversely, for all other $\td_{2n}, 2\leq n\leq 8$, 
the existence is easily derived from our analysis of $\ta_{2n-1}$ configurations
extended by sections.

\begin{Example}
 $\td_{16}$ is obtained from $\ta_{15}$ by attaching two sections (aka points in \ref{ss:a15})
that are not opposite while omitting the middle $(-2)$ curve (aka line) of the shorter path connecting them in the extended $\ta_{15}$ graph.
\end{Example}

We shall now disprove the existence of a configuration of type $\td_{20}$ in $\PP^2(\F_4)$.
The configuration is sketched in the following figure:

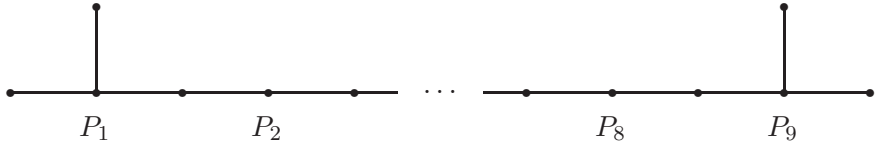
\begin{figure}[ht!]
\setlength{\unitlength}{.45in}
\begin{picture}(11,2)(-.4,.5)
\thicklines




\multiput(0,1)(1,0){5}{\circle*{.1}}
\put(0,1){\line(1,0){4.5}}

\multiput(6,1)(1,0){5}{\circle*{.1}}
\put(5.5,1){\line(1,0){4.5}}


\put(4.8,1){$\hdots$}

\put(1,2){\circle*{.1}}
\put(1,1){\line(0,1){1}}
\put(9,2){\circle*{.1}}
\put(9,1){\line(0,1){1}}

\put(.8,.5){$P_1$}
\put(2.8,.5){$P_2$}
\put(6.8,.5){$P_8$}
\put(8.8,.5){$P_9$}



%

%



%





\end{picture}
\caption{$\tilde D_{20}$ configuration in $\PP^2(\F_4)$}
\label{Fig:20}
\end{figure}

The configuration includes 3 lines through $P_1$, 
so there are 2 others which we label $\ell_1, \ell_2$.
In fact these 2 lines have to contain all points $P_3,\hdots, P_9$
which are off the 3 lines though $P_1$ from the figure, but neither contains $P_2$.
We infer that the odd-indexed points $P_3,
\hdots,P_9$ sit on $\ell_1$ and the even-indexed points $P_4,\hdots,P_8$ on $\ell_2$.
The same argument applies to $P_9$ and leads to a line $\ell_3$ containing the even-indexed points $P_2,\hdots,P_6$.
But then clearly $\ell_2=\ell_3$ containing both $P_2$ and $P_8$.
This contradicts the choice of configuration which is thus impossible on $\PP^2(\F_4)$.

Similarly for $\td_{2n-1}$ we have a path with an extra point on one side and an
extra line on the other.
From our classification we deduce that this is not possible past $\td_7$
while we have already seen $\td_5$ and $\td_7$ in \ref{ss:a7} and \ref{ss:a11}.



\section{Reduction from characteristic zero}
\label{s:red}

The classification of elliptic fibrations on $X$ enables us to
determine all elliptic K3 surfaces in characteristic zero with
good reduction at (a prime above) $2$ yielding $X$.

Let us explain why we consider this an interesting question.
The main reason is that we have plenty of possible candidates at
hand.
For instance, we could work with singular K3 surfaces (attaining
the maximal Picard number $\rho=20$ over $\C$).
Singular K3 surfaces always come with natural elliptic fibrations
from the so-called Shioda-Inose structure.
Namely there is Inose's pencil with two $II^*$ fibers and (in general)
$\MW$-rank two (cf.~\cite{Sandwich}).
But those special fibers have wild ramification in characteristic
$2$ and $3$ by \cite{SS2}, so there has to be some kind of degeneration.
In fact, one can show that for any singular K3 surface the Inose
pencil degenerates modulo (any prime above) $2$ to the quasi-elliptic
fibration \#17 (so that the reduction is not smooth due to the
$\td_4$ fiber on the reduction).
A similar pattern holds in general:

\begin{Proposition}
\label{prop:0}
Let $k$ denote a field of characteristic zero with a fixed prime ideal
above $2$.
Then  exactly the jacobian elliptic  fibrations \#6 and \#8 reduce smoothly
to $X$ up to isomorphism over $\bar k$.
\end{Proposition}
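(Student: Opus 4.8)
The plan is to reduce the question to a finite combinatorial check over the list in Table \ref{Tab:fibr}. Suppose $Y/k$ is a K3 surface with good reduction $X$ at the chosen prime above $2$, and suppose $Y$ carries a jacobian elliptic fibration $\pi$ that reduces smoothly, i.e.\ the special fibre of the corresponding fibration on $X$ is again elliptic (not quasi-elliptic). Good reduction forces $Y$ to be a singular K3 surface, since $\rho(Y)\le 20$ while $\rho(X)=22$ and specialization of Néron--Severi groups is injective; in fact $\rho(Y)=20$ because the reduction map $\NS(Y)\hookrightarrow\NS(X)$ is primitive and must hit a rank-$20$ sublattice whose orthogonal complement in $\NS(X)$ has the right (definite, rank-$2$) signature for a transcendental lattice. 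The elliptic fibration on $Y$ specializes to an elliptic fibration on $X$ with the \emph{same} configuration of reducible fibres and the same Mordell--Weil group (good reduction means no fibres can degenerate further, and the Shioda--Tate formula transports the essential lattice verbatim). So the essential lattice $M$ of $\pi$ must appear as one of the rows of Table \ref{Tab:fibr}, and by Theorem \ref{thm:e-qe} the ``smooth reduction'' hypothesis singles out exactly the rows marked elliptic, namely \#1, \#3, \#4, \#6, \#7, \#8, \#10, \#11.

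Next I would eliminate all but \#6 and \#8 by a characteristic-zero obstruction. For each of the eight elliptic rows, the pair (reducible fibre configuration, $\MW$) would have to be realized by an elliptic K3 surface over $k\subset\C$. But a singular K3 surface attaining $\rho=20$ over $\C$ satisfies Lefschetz' bound \eqref{eq:Lef} with equality, so its Néron--Severi lattice has rank exactly $20$ and discriminant $-\det T(Y)$ with $T(Y)$ positive definite of rank $2$; equivalently $\NS(Y)=U+M$ with $M$ negative definite of rank $18$ and $\operatorname{disc}$ a genuine (nonzero) integer. The point is that the essential lattices $M$ in rows \#1, \#3, \#4, \#7, \#10, \#11 either have rank $>18$ (impossible over $\C$), or, when the rank is $18$, the resulting discriminant form is not realizable by any positive-definite binary form — for instance the torsion constraints (a $5$-torsion or $6$-torsion section together with the specified $I_n^*$ or $IV^*$-free configuration) are incompatible with characteristic-zero Mordell--Weil theory, or the fibre type $I_2^{*}$/$IV^{*}$ data forces wild-ramification phenomena that cannot occur away from $2$ and $3$. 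Concretely: \#1 comes from the Hesse pencil by an inseparable base change that has no characteristic-zero analogue preserving $\rho=22$; \#4 and \#11 involve the ``peculiar'' $p^n$-torsion sections meeting the zero section, a phenomenon that cannot happen in characteristic zero; \#3, \#7, \#10 carry configurations whose $\NS$-rank or discriminant exceeds what a $\rho=20$ surface over $\C$ permits. I would run through these six cases one at a time, each time exhibiting the numerical contradiction.

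For the two survivors \#6 and \#8, I would then produce the characteristic-zero models explicitly and check that they do reduce smoothly. Row \#8 ($R(M)=E_6^3$, $\MW=A_2[2/3]\times\Z/3\Z$) is realized by the base change $t\mapsto s=t^2(t-1)^2$ (resp.\ a cyclic $t\mapsto t^3$ over $\overline{\Q}$) of the universal curve $y^2+sy=x^3$ for $3$-torsion; this is the rigid K3 built from the $j=0$ curve, defined over $\Q$, with $\rho=20$, and its fibres of type $IV^*$ reduce to $IV^*$ away from $3$ — in particular at $2$ — so the reduction is $X$ with its fibration \#8. Row \#6 ($R(M)=D_7A_{11}$, $\MW=\Z/4\Z\times A_2[2/3]$) arises from the $\Gamma_1(4)$ universal curve \eqref{eq:4} by the cyclic base change $t\mapsto s=t^3$; over $\Q$ this is again a singular K3 surface, the two reducible fibres $I_{12}$ and $I_4^*$ persist at $2$ (the $I_4^*$ stays $I_4^*$, only the wild-ramification index changes), and one checks the special fibre is the elliptic model \#6 on $X$ from Section \ref{ss:6}. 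The arithmetic and geometry over $\Q$ of \#6 are already worked out in \cite{S-MJM}, which I would cite for the smoothness of the reduction.

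The main obstacle I anticipate is the middle step: ruling out \#3, \#7, \#10 cleanly. For \#1, \#4, \#11 the obstruction is transparent (rank too large, or peculiar torsion), but \#3, \#7, \#10 have $\rho=20$-compatible ranks, so the contradiction has to come from a finer invariant — the genus of the discriminant form $q_M$ of $U+M$ versus the genera of positive-definite binary quadratic forms, or an explicit appeal to the classification of Mordell--Weil groups of extremal elliptic K3 surfaces over $\C$ together with the observation that these particular configurations force an inseparable base-change origin. I would handle this by computing $q_M$ in each of the three cases and checking against the (short) list of admissible transcendental lattices; this is the one place where a genuine lattice computation, rather than a quotable citation, is unavoidable.
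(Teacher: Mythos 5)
There is a genuine gap, and it sits exactly at the step you yourself flag as problematic. Your claim that the specialization carries over ``the \emph{same} configuration of reducible fibres and the same Mordell--Weil group'' and that ``the Shioda--Tate formula transports the essential lattice verbatim'' cannot be right: it would force $\rho(X)=\rho(Y)\le 20$, contradicting $\rho(X)=22$. What smooth reduction actually guarantees is only half of this: the reducible fibres cannot acquire \emph{additional} components (fibres may only degenerate from multiplicative to additive type among the irreducible ones and $\ta_1,\ta_2$). Consequently the two extra classes in $\NS(X)$ must be accounted for by sections, i.e.
\[
\mathrm{rank}\,\MW(X\to\PP^1)\;\ge\;\rho(X)-\rho(Y)\;\ge\;22-20\;=\;2 .
\]
This single inequality is the heart of the paper's proof: a glance at Table~\ref{Tab:fibr} shows that \#6 and \#8 are the \emph{only} fibrations with \MoW\ rank $\ge 2$, so all other rows are eliminated at once. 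Your substitute strategy --- ruling out \#1, \#3, \#4, \#7, \#10, \#11 by showing their data are not realizable over $\C$ --- is aimed at the wrong object: the characteristic-zero fibration on $Y$ has essential lattice of rank $\le 18$, not the rank-$20$ lattice $M$ from the table, so ``rank too large'' is never the obstruction; the peculiar torsion sections of \#4 and \#11 live on the reduction, not on $Y$, so their impossibility in characteristic zero does not preclude $Y$ from existing; and the cases \#3, \#7, \#10 are, as you concede, left open.

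Once the rank inequality is in place, the rest of your argument is essentially the paper's: equality forces $\rho(Y)=20$ and $\MW(Y)$ finite, so $Y$ is an extremal (singular) K3 surface whose reducible-fibre configuration agrees with that of the reduction; such a configuration determines $Y$ uniquely over $\bar k$, and the explicit equations for \#6 and \#8 (base changes of universal elliptic curves, cf.~\cite{S-MJM}) work in every characteristic other than $3$, which gives the existence half. So the existence part of your proposal is sound; it is the exclusion of the other sixteen fibrations that must be replaced by the Picard-number/\MoW-rank count.
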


\begin{proof}
Let $S\to \PP^1$ be an elliptic surface over $k$.
In order for this specific elliptic fibration to have good reduction,
the singular fibers are only allowed to  degenerate from multiplicative
type  to additive type, but never with additional fiber components
(only irreducible fibers (nodal and cuspidal) and types $\ta_1,
\ta_2$).

In the present situation, $X$ is supersingular with $\rho(X)=22$,
but in characteristic  zero $\rho(S)\leq h^{1,1}(S)=20$.
Hence in case of good reduction, the Picard number can only be
increased by additional sections.
In general this gives
\[
\mbox{rank}(\MW(X\to \PP^1))\geq \rho(X)-\rho(S)\geq 2.
\]
But in the present situation, \#6 and \#8 are the only elliptic
fibrations on $X$ with $\MW$ rank at least two.
In fact, we have equality, so any elliptic lift $S$ must have $\rho(S)=20$
and finite $\MW$ (i.e.~it is extremal).
In particular, this implies that the configurations of reducible
singular fibers coincide in characteristic zero and $2$.
(In characteristic zero, \#6 also has three singular fibers of
type $I_1$;
upon reduction mod $2$, these singular fibers are indeed merged
with the $\td_7$ fiber,
but the degeneration only contributes to the wild ramification
\cite{S-MJM}.)
Over an algebraically closed field, each configuration determines
a unique elliptic surface, and the equations from \#6, \#8 do
in fact work in any characteristic other than $3$.
\end{proof}

\begin{Remark}
Over non-algebraically closed fields (such as number fields, finite
fields),
there are cubic twists occurring.
See~\cite{S-MJM} for an analysis  over $\Q$
 that  generalizes directly to other fields.
\end{Remark}

\begin{Remark}
A singular K3 surface with supersingular  good reduction
automatically leads to Artin invariant one by \cite[Proposition 1.0.1]{Shimada}.
Thus we infer from Proposition \ref{prop:0}
that \#6 and \#8 give the only jacobian elliptic singular K3 surfaces
with supersingular good reduction at a prime above $2$.
\end{Remark}

\subsection*{Acknowledgements}

We thank the referee for her or his comments.
During the preparation of this manuscript,
both authors enjoyed the hospitality of each other's home institution
whom we thank for the support.
This work was started when the second author held a position at
Copenhagen University.

\end{document}